\newtheorem {teo} {\bf Theorem\,} [section]
\newtheorem {prop} [teo] {\bf Proposition}
\newtheorem {coro} [teo]{\bf Corollary}
\newtheorem {lemma} [teo] {\bf Lemma}
\newtheorem {defn} [teo] {\bf Definition}
\newtheorem {Remark} [teo] {\bf Remark}
\newcommand{\be}{\begin{eqnarray}}
\newcommand{\ee}{\end{eqnarray}}
\newcommand{\benn}{\begin{eqnarray*}}
\newcommand{\eenn}{\end{eqnarray*}}
\newcommand{\bse}{\begin{equation}}
\newcommand{\ese}{\end{equation}}
\newcommand{\bsenn}{\begin{displaymath}}
\newcommand{\esenn}{\end{displaymath}}
\newcommand{\sgn}{\textrm{sgn}}
\date{\today}
\title[Property for non local evolution equations]
{Dissipative property for non local evolution equations}
\author[S. H. da Silva\\ 
]
{Severino H. da Silva} 
\address{Severino Horácio da Silva \newline
Unidade Acadêmica de Matemática UAMat/CCT/UFCG\\
Rua Aprígio Veloso, 882,  Bairro Universitário, CEP.: 58429-900, 
Campina Grande - PB, Brazil}
\email{horaciousp@gmail.com; horacio@dme.ufcg.edu.br}
\author[A. R. G. Garcia 
]
{Antonio R. G. Garcia} 
\address{Antonio Ronaldo Gomes Garcia \newline
Centro de Ciências Exatas e Naturais,
Universidade Federal Rural do Semi-Árido,
Mossoró-RN, Brazil,
Av. Francisco Mota, 572,  CEP.: 59.625-900.}
\email{gomesgarcia@gmail.com; ronaldogarcia@ufersa.edu.br}
\author[B. E. P. Lucena 
]
{Bruna E. P. Lucena} 
\address{Bruna E. Pereira Lucena\newline
Unidade Acadêmica de Matemática UAMat/CCT/UFCG\\
Rua Aprígio Veloso, 882,  Bairro Universitário, CEP.: 58429-900, 
Campina Grande - PB, Brazil}
\email{brunapereiraufcg@gmail.com }
\thanks{Partially supported by CAPES-Brazil}
\subjclass[2000]{45J05, 45M05, 37B25}
\keywords{Non local equation; Well-posedness; Smoothness orbit; Global Attractor; Lyapunov's functional}
\begin{document}
\maketitle

\begin{abstract}
In this work we consider the non local evolution problem
\[
\begin{cases}
\partial_t u(x,t)=-u(x,t)+g(\beta K(f\circ u)(x,t)+\beta h), ~x
\in\Omega, ~t\in[0,\infty[;\\
u(x,t)=0, ~x\in\mathbb{R}^N\setminus\Omega, ~t\in[0,\infty[;\\
u(x,0)=u_0(x),~x\in\mathbb{R}^N,
\end{cases}
\]
where $\Omega$ is a smooth bounded domain in $\mathbb{R}^N; ~g,f:
\mathbb{R}\to\mathbb{R}$ satisfying certain growing condition and $K$ 
is an integral operator with  symmetric kernel,
$ Kv(x)=\int_{\mathbb{R}^{N}}J(x,y)v(y)dy.$ 
We prove that
Cauchy problem above is well posed, the solutions are smooth with
respect to initial conditions, and we show the existence of a global 
attractor. Futhermore, we exhibit a Lyapunov's functional, concluding that the 
flow generated by this equation has a gradient
property. 
\end{abstract}



\section{Introduction}\label{intro}
We consider the non local evolution problem
\begin{equation}\label{prob}
\begin{cases}
\partial_t u(x,t)=-u(x,t)+g(\beta K(f\circ u)(x,t)+\beta h), x\in\Omega,~t\in[0,\infty[\\
u(x,t)=0, ~x\in\mathbb{R}^N\setminus\Omega, ~t\in[0,\infty[\\
u(x,0)=u_0(x),~x\in\mathbb{R}^N,
\end{cases}, 
\end{equation}
where $u(x,t)$ is a real function on $\mathbb{R}^{N} \times [0,\infty[$, $\Omega$ is a bounded smooth domain in $\mathbb{R}^N ~(N\geq 1);
~h$ and $\beta$ are nonnegative constants;
$f,g:\mathbb{R}\to\mathbb{R}$ are locally Lipschitz continuous 
satisfying some growth conditions and $K$ is an integral operator 
with symmetric nonnegative kernel, given by
\begin{equation}\label{mapK}
Kv(x):=\intop_{{\mathbb{R}^{N}}} J(x,y)v(y)dy,
\end{equation}
where $J$ is a symmetric non negative function of class $\mathscr{C}^{1}$, with  

$$\intop_{\mathbb{R}^N} J(x,y)dy=\intop_{\mathbb{R}^N}J(x,y)dx=1.$$ 

The dynamics of non local evolution
Equations like in (\ref{prob}) has attracted the attention of many 
researchers in the last years; see for instance
\cite{Amari,Amari2, FAH, Chen,Coombes,DaSilvaPereira, Silva,Silva2,Silva3,Ermentrout,
Ermentrout3,Ermentrout2,Kishimoto,Kubota,Laing,Rubin,Wilson}
and \cite{Wu}. However, the model considered here presents innovation
and generalizes the model considered  \cite{FAH,DaSilvaPereira, Laing} and
\cite{Orland}, which can be obtained as a particular case of
(\ref{prob}) with $f$ being the identity, as well as it  generalizes 
the model considered in
\cite{Kishimoto,Laing,Rubin,Silva,Silva2,Silva3,Wilson} and \cite{Wu}, 
which can be obtained as a particular case of (\ref{prob}) 
where $g$ is the identity, $\beta=1$ and the integral operator $K$ is
the convolution product. When $g$ and $f$ are identity, $\beta=1$ and 
the integral operator $K$ is the convolution product, we also obtain as particular case of (\ref{prob}) the
model considered in \cite{Cortazar}.  

The approache considered here  was
motivated by similar approaches in   \cite{FAH, DaSB} and
\cite{rossi}, whose basic idea is to find an 
abstract way to impose Dirichlet boundary conditions in 
non local evolution equations.

The paper is organized as follows. In Section \ref{wellposed}, 
assuming a growth condition on the functions $g$ and $f$,
we prove that (\ref{prob}) is well posed with globally defined
solution. In Section \ref{Smoothness} we prove  that (\ref{prob})   
generates a $\mathscr{C}^{1}$ flow in a space $X$ which is isometric to
$L^{p}(\Omega)$. In Section \ref{global}, we prove existence 
of a global attractor, and establish some regularity properties for it. 
In Section \ref{comparison}, we prove comparison and boundedness 
results for the solutions of (\ref{prob}). 
Finally, in Section \ref{lyap}, we exhibit a continuous Lyapunov's
functional for the flow generated by (\ref{prob}), and we use 
it to prove that the this flow has the gradient property in the sense
of \cite{Hale}.

\section{Well posedness}\label{wellposed}

In this section, we prove that the Cauchy problem (\ref{prob}) 
is well posed in the suitable phase space
\[X=\left\{u\in L^{p}\left(\mathbb{R}^{N}\right)~:~ u(x)=0, 
~\textrm{if}~x\in\mathbb{R}^N\setminus\Omega\right\}\]
with the induced norm of $L^{p}\left(\mathbb{R}^{N}\right)$. 
For this we assume that the functions $g$ and $f$ satisfy 
the ``suitable'' following growth conditions:
{\it there
exist non negative constants $k_1$, $k_2$, $c_1$  and $c_2$ such that
\begin{equation}\label{dissipg}
|g(x)|\leq k_{1}|x|+k_{2},~ \forall ~ x\in\mathbb{R}
\end{equation} 
and
\begin{equation}\label{dissipf}
|f(x)|\leq c_{1}|x|+c_{2}, ~\forall ~ x\in\mathbb{R}.
\end{equation} 
}
The space $X$ is canonically isomorphic to
$L^p(\Omega)$ and we usually identify the two spaces, 
without further comment. We also use the same notation for a
function in $\mathbb{R}^N $ and its restriction to
$\Omega$ for simplicity, wherever we believe the intention 
is clear from the context.

In order to obtain well posedness of (\ref{prob}), 
we  consider the Cauchy problem
\begin{equation}\label{CP}
\begin{cases}
\displaystyle{\partial_t u}=-u+F(u), \\
u(t_0)=u_{0},
\end{cases}
\end{equation}
where the map $F:X\to X$ is defined by
\begin{equation} \label{mapF}
F(u)(x)=
\begin{cases}
g(\beta K(f\circ u)(x)+\beta h), & x\in \Omega,\\
0, & x\in\mathbb{R}^N\setminus\Omega.
 \end{cases}
\end{equation}

Depending on the properties assumed for $J$, the map given by 
(\ref{mapK}) is well defined as a bounded linear operator in 
various functions spaces and, in particular, it will be well 
defined in $X$.

To prove that $F$ given in (\ref{mapF}) is well defined, under the 
conditions given in (\ref{dissipg}) and (\ref{dissipf}), 
we need of the estimates below for the map $K$, which has been proven in
\cite{DaSilvaPereira}.

\begin{lemma}\label{boundK}
Let $K$ be the map defined by (\ref{mapK}) and $\|J\|_{r}$:=
$\sup_{x\in \Omega}\|J(x,\cdot)\|_{L^{r}(\Omega) },~ 1\leq r\leq \infty.$
If $u\in L^p{(\Omega)}, ~1\leq p \leq \infty$, then
$(Ku\in L^{\infty}{(\Omega)}$,

\begin{equation}\label{estimateLq}
 |Ku(x)|\leq\|J\|_{q}\|u\|_{L^p{(\Omega)}}, ~\forall ~ x\in\Omega,
 \end{equation}
where $1\leq q\leq \infty$ is the conjugate exponent of $p$, and
\begin{equation}\label{estimateL1}
\|Ku\|_{L^{p}(\Omega)}\leq\|J\|_{1}
\|u\|_{L^{p}(\Omega)} \leq\|u\|_{L^{p}(\Omega)}.
\end{equation}
Moreover, if  $u\in L^1{(\Omega)}$, then
$Ku \in L^{p}{(\Omega)}, ~ 1\leq p\leq\infty$, and
\begin{equation}\label{estimateLp}
\|Ku\|_{L^{p}(\Omega)}\leq\|J\|_{p}\|u\|_{L^{1}(\Omega)}.
\end{equation}
\end{lemma}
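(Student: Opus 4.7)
The plan is to establish the four estimates in turn, each via a classical integral inequality adapted to the (non-convolution but symmetric) kernel $J$. Throughout I regard $u\in L^p(\Omega)$ as a function on $\mathbb{R}^N$ vanishing outside $\Omega$, so the integral defining $Kv$ is effectively over $\Omega$.

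First I would prove \eqref{estimateLq} (and the $L^\infty$ conclusion) by a direct pointwise Hölder estimate: for each fixed $x\in\Omega$, $|Ku(x)|\le \|J(x,\cdot)\|_{L^q(\Omega)}\|u\|_{L^p(\Omega)}\le \|J\|_q\|u\|_{L^p(\Omega)}$. Taking $\sup_{x\in\Omega}$ gives both the membership $Ku\in L^\infty(\Omega)$ and the pointwise bound.

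The middle bound \eqref{estimateL1} is the delicate one, since a naive Hölder-plus-Fubini calculation yields only a suboptimal power of $\|J\|_1$. My preferred route is Jensen's inequality. Setting $m(x):=\int_\Omega J(x,y)\,dy \le \|J\|_1$, the normalized kernel $J(x,y)/m(x)\,dy$ is a probability measure on $\Omega$, so convexity of $t\mapsto t^p$ for $1\le p<\infty$ yields $|Ku(x)|^p \le m(x)^{p-1}\int_\Omega J(x,y)|u(y)|^p\,dy$. Integrating in $x$, swapping the order of integration, and exploiting the symmetry of $J$ together with $\int_{\mathbb{R}^N} J(x,y)\,dx = 1$, one collects exactly the factor $\|J\|_1^{p-1}\cdot\|J\|_1 = \|J\|_1^p$, which gives the stated bound. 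The extremal cases are handled separately: $p=1$ by direct Fubini, and $p=\infty$ by taking suprema in \eqref{estimateLq} with $q=1$.

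Finally, for \eqref{estimateLp} under the stronger hypothesis $u\in L^1(\Omega)$, Minkowski's integral inequality gives $\|Ku\|_{L^p(\Omega)} \le \int_\Omega |u(y)|\,\|J(\cdot,y)\|_{L^p(\Omega)}\,dy$, and then the symmetry of $J$ lets me rewrite $\|J(\cdot,y)\|_{L^p(\Omega)} = \|J(y,\cdot)\|_{L^p(\Omega)} \le \|J\|_p$, from which the conclusion is immediate. The main obstacle throughout is in \eqref{estimateL1}: obtaining the clean constant $\|J\|_1$ (rather than a fractional power) requires pairing the two normalizations $\int J(x,y)\,dy$ and $\int J(x,y)\,dx$ correctly, which is made possible only by the symmetry of $J$ after Fubini; the remaining bounds are essentially computational once Hölder and Minkowski are set up.
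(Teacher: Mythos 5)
Your proof is correct. The paper does not actually prove this lemma itself (it is quoted from \cite{DaSilvaPereira}), and your argument --- pointwise H\"older for \eqref{estimateLq}, a Jensen/Schur-test interpolation combined with Fubini and the symmetry of $J$ for \eqref{estimateL1}, and Minkowski's integral inequality plus symmetry for \eqref{estimateLp} --- is the standard route and matches what the cited reference does.
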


\begin{defn}\label{loclip}
If $E$ is a normed space, we say that a function  
$F:E\to E$ is locally Lipschitz continuous (or simply locally Lipschitz) if,
for any $x_0\in E$, there exists a constant
$C$ and a rectangle $R=\{x\in E~:~ \|x-x_0\|<b\}$ such that,
if $x$ and $y$ belong to $R$, then $\|F(x)-F(y)\|\leq C\|x-y\|$; 
we say that $F$ is Lipschitz continuous on bounded sets if the rectangle $R$ in the
previous definition may chosen as any bounded rectangle in $E$.
\end{defn}

\begin{Remark}
The two definitions in (\ref{loclip})  are equivalent 
if the normed space $E$ is locally compact.
\end{Remark}

\begin{prop} \label{limitationF}
In addition to the hypotheses from Lemma \ref{boundK}, suppose that the
functions $g$ and $f$ satisfy the two growth conditions
(\ref{dissipg}) and (\ref{dissipf}). Then the function $F$ given
by \eqref{mapF} is well defined in $L^{p}(\Omega)$. 
\end{prop}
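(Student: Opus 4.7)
The plan is to unwind the definition of $F$ and apply the growth conditions on $f$ and $g$ together with the mapping properties of $K$ from Lemma \ref{boundK}. Since $F(u)$ vanishes outside the bounded set $\Omega$, it suffices to control the $L^p(\Omega)$ norm of $x\mapsto g(\beta K(f\circ u)(x)+\beta h)$.

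First I would verify that $f\circ u\in L^p(\Omega)$ whenever $u\in L^p(\Omega)$. The growth condition \eqref{dissipf} gives the pointwise bound $|f(u(x))|\le c_1|u(x)|+c_2$, so by Minkowski's inequality
\[
\|f\circ u\|_{L^p(\Omega)}\le c_1\|u\|_{L^p(\Omega)}+c_2|\Omega|^{1/p},
\]
which is finite because $\Omega$ is bounded. Next, invoking Lemma \ref{boundK}, in particular estimate \eqref{estimateL1}, I would conclude that $K(f\circ u)\in L^p(\Omega)$ with
\[
\|K(f\circ u)\|_{L^p(\Omega)}\le \|J\|_1\,\|f\circ u\|_{L^p(\Omega)}\le \|f\circ u\|_{L^p(\Omega)}.
\]

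Then I would apply the growth condition \eqref{dissipg} on $g$ pointwise. For $x\in\Omega$,
\[
|g(\beta K(f\circ u)(x)+\beta h)|\le k_1\bigl(\beta|K(f\circ u)(x)|+\beta h\bigr)+k_2.
\]
Taking the $L^p(\Omega)$ norm and using Minkowski once more, together with the bound on $\|K(f\circ u)\|_{L^p(\Omega)}$ already obtained, yields
\[
\|F(u)\|_{L^p(\Omega)}\le k_1\beta\|K(f\circ u)\|_{L^p(\Omega)}+(k_1\beta h+k_2)|\Omega|^{1/p},
\]
which combined with the previous inequalities gives an explicit bound in terms of $\|u\|_{L^p(\Omega)}$, $|\Omega|$ and the constants $k_i,c_i,\beta,h$. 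This shows $F(u)\in L^p(\Omega)$, and since $F(u)$ is by construction zero outside $\Omega$, we have $F(u)\in X$.

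There is no serious obstacle here; the only thing to be slightly careful about is that the two successive compositions (first by $f$, then by $g$) each produce the constant term contribution $c_2|\Omega|^{1/p}$ and $(k_1\beta h+k_2)|\Omega|^{1/p}$, which are finite precisely because $\Omega$ is bounded — this is the role played by the hypothesis that $\Omega$ is bounded in the statement of the problem. The measurability of $F(u)$ is immediate since $f,g$ are continuous and $K$ maps measurable functions to continuous (indeed, $L^\infty$) functions by \eqref{estimateLq}.
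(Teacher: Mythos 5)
Your proof is correct, and it follows the same overall strategy as the paper (unwind the definition of $F$, apply the growth conditions \eqref{dissipf} and \eqref{dissipg} pointwise, and control $K$ via Lemma \ref{boundK}), but it routes the estimate through a different part of that lemma. The paper first bounds $\|f\circ u\|_{L^1(\Omega)}$ by H\"older's inequality on the bounded domain and then invokes the smoothing estimate \eqref{estimateLp}, $\|Kv\|_{L^p}\le\|J\|_p\|v\|_{L^1}$; you instead keep $f\circ u$ in $L^p(\Omega)$ via Minkowski and use the boundedness estimate \eqref{estimateL1}, $\|Kv\|_{L^p}\le\|J\|_1\|v\|_{L^p}\le\|v\|_{L^p}$. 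Both are legitimate; your version produces a cleaner constant (with $\|J\|_1\le 1$ in place of $\|J\|_p|\Omega|^{1/q}$) and, with the usual convention $|\Omega|^{1/\infty}=1$, treats the case $p=\infty$ uniformly with $p<\infty$, whereas the paper handles $p=\infty$ as a separate (easy) case. One pedantic remark: estimate \eqref{estimateLq} gives that $K(f\circ u)$ is bounded, not continuous per se; continuity follows rather from $J$ being $\mathscr{C}^1$ together with dominated convergence, but measurability of $F(u)$ is all that is needed and your argument supplies it.
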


\begin{proof}
Consider $1\leq p<\infty$ and let $u\in L^{p}(\Omega)$. 
Then, using Hölder inequality (see \cite{folland}) and 
(\ref{dissipf}), we obtain
\begin{equation}\label{estfL1}
\|f(u)\|_{L^1(\Omega)}\leq\intop_{\Omega}[c_1|u(x)|+c_2]dx \leq
c_1|\Omega|^{\frac{1}{q}}\|u\|_{L^p(\Omega)}+c_2|\Omega|,
\end{equation}
where $q$ denotes the conjugate expoent of $p$.

From estimates \eqref{estimateLp} and (\ref{estfL1}), it follows that
\begin{eqnarray}\label{estKfLp}
\|Kf(u)\|_{L^p(\Omega)}&\leq&\|J\|_{p}\|f(u)\|_{L^1(\Omega)}\nonumber\\
&\leq&\|J\|_{ p}(c_1|\Omega|^{\frac{1}{q}}\|u\|_{L^p(\Omega)}+c_2|\Omega|)\nonumber\\
&=&c_1\|J\|_{ p}|\Omega|^{\frac{1}{q}}\|u\|_{L^p(\Omega)}+\|J\|_{ p}c_2|\Omega|.
\end{eqnarray}
Thus, using (\ref{estKfLp}), it follows that
\begin{eqnarray}\label{estFLp}
\|F(u)\|_{L^p(\Omega)}&=&\|g(\beta|Kf(u)|+\beta h)\|_{L^p(\Omega)}\nonumber\\
&\leq&\left(\intop_{\Omega}[\beta k_1|K((f(u))(x)|+k_1\beta h+k_2]^{p}dx\right)^{\frac{1}{p}}\nonumber\\
&\leq&\|\beta k_1|Kf(u)|+(k_1\beta h+k_2)\|_{ L^p(\Omega)}\nonumber\\
&\leq&\beta k_1\|Kf(u)\|_{ L^p(\Omega)}+\|k_1\beta h+k_2\|_{L^p(\Omega)}\nonumber\\
&\leq&\beta k_1(c_1\|J\|_{p}|\Omega|^{\frac{1}{q}}\|u\|_{ L^p(\Omega)}+\|J\|_{ p}c_2|\Omega|)+(k_1\beta h+k_2)|\Omega|^{\frac{1}{p}}\nonumber\\
&=&\beta k_1c_1\|J\|_{p}|\Omega|^{\frac{1}{q}}\|u\|_{L^p(\Omega)}+\beta k_1\|J\|_{ p}c_2|\Omega|+(k_1\beta h+k_2)|\Omega|^{\frac{1}{p}},
\end{eqnarray}
showing that, in this case, $F$ is well defined. 

The proof for $p=\infty$ is straightforward, because 
if $u\in L^{\infty}(\Omega)$, from (\ref{dissipf}) it follows 
that $f(u)\in L^{\infty}(\Omega)$ and, consequently
\begin{eqnarray*}
|K(f(u)(x))|\leq\|J\|_{1}\|f(u)\|_{\infty}=\|f(u)\|_{\infty}.
\end{eqnarray*}
Thus, using (\ref{dissipf}), we obtain
\begin{eqnarray*}
\|Kf(u)\|_{L^\infty(\Omega)}\leq c_1\|u\|_{\infty}+c_2.
\end{eqnarray*}
Hence, from (\ref{dissipg}), we have
\begin{eqnarray*}
\|F(u)\|_{L^\infty(\Omega)}&\leq&k_1\beta\|Kf(u)\|_{L^\infty(\Omega)}+k_1\beta h+k_2\\ 
&\leq&\beta k_1(c_1\|u\|_{\infty}+c_2)+k_1\beta h+k_2.
\end{eqnarray*}
Thus, we conclude the result.
\end{proof}

\begin{prop}\label{wellp}
Suppose, in addition to the hypotheses from Proposition
\ref{limitationF}, that the functions $g$ and $f$ are Lipschitz continuous on bounded. Then the function $F$ given by 
\eqref{mapF} is Lipschitz continuous on  bounded sets of 
$L^{p}(\Omega), ~1\leq p\leq \infty$.
\end{prop}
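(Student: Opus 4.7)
The plan is to view $F$ as a composition $u \mapsto f \circ u \mapsto K(f \circ u) \mapsto \beta K(f \circ u) + \beta h \mapsto g(\,\cdot\,)$ and to transfer the Lipschitz‐on‐bounded‐sets property through each link. The crucial ingredient is the smoothing property of $K$ given by Lemma \ref{boundK}: $K$ maps $L^{1}(\Omega)$ (and hence $L^{p}(\Omega)$) into $L^{\infty}(\Omega)$, so whenever $u$ ranges over a bounded subset of $L^{p}(\Omega)$ the argument of $g$ ranges over a fixed bounded subset of $\mathbb{R}$, which is exactly what is needed to apply the local Lipschitz property of $g$.

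Fix $M>0$ and let $B=\{u\in L^{p}(\Omega):\|u\|_{L^{p}(\Omega)}\le M\}$. First I would bootstrap a pointwise bound for the argument of $g$: the estimate (\ref{estfL1}) gives $\|f\circ u\|_{L^{1}(\Omega)}\le c_{1}|\Omega|^{1/q}M+c_{2}|\Omega|=:A(M)$ for every $u\in B$, and then estimate (\ref{estimateLq}) applied with $p=1$, $q=\infty$ yields
\[
|\beta K(f\circ u)(x)+\beta h|\;\le\;R\;:=\;\beta\|J\|_{\infty}A(M)+\beta h
\]
uniformly for $x\in\Omega$ and $u\in B$. Denoting by $L_{g}(R)$ the Lipschitz constant of $g$ on $[-R,R]$, this gives the pointwise bound $|F(u)(x)-F(v)(x)|\le \beta L_{g}(R)\,|K(f\circ u-f\circ v)(x)|$ for all $u,v\in B$ and a.e.\ $x\in\Omega$. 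Using the $L^{p}$‐estimate (\ref{estimateL1}) for $K$ I then reduce matters to a Nemitskii‐type estimate for $f$:
\[
\|F(u)-F(v)\|_{L^{p}(\Omega)}\;\le\;\beta L_{g}(R)\,\|J\|_{1}\,\|f\circ u-f\circ v\|_{L^{p}(\Omega)}.
\]

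For $p=\infty$ the last step is immediate: if $\|u\|_{\infty},\|v\|_{\infty}\le M$ then $u(x),v(x)\in[-M,M]$ a.e., and the local Lipschitz constant $L_{f}(M)$ of $f$ on $[-M,M]$ gives $\|f\circ u-f\circ v\|_{\infty}\le L_{f}(M)\|u-v\|_{\infty}$, closing the chain.

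The main obstacle is the case $1\le p<\infty$: an $L^{p}$‐ball does not impose any pointwise bound on its elements, so the local Lipschitz hypothesis on $f$ cannot, by itself, control the Nemitskii operator $u\mapsto f\circ u$ as a map $L^{p}(\Omega)\to L^{p}(\Omega)$. I expect the proof to handle this either by interpreting ``Lipschitz on bounded'' in the uniform sense (so that $\|f\circ u-f\circ v\|_{L^{p}(\Omega)}\le L_{f}\|u-v\|_{L^{p}(\Omega)}$ holds with a single constant $L_{f}$, which is the standard sufficient condition for Nemitskii Lipschitz continuity on $L^{p}$), or by combining a truncation argument with the growth bound (\ref{dissipf}) to reduce to the $L^{\infty}$ case. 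In either case, the final Lipschitz constant of $F$ on $B$ depends only on $M$, $\beta$, $h$, $\|J\|_{1}$, $\|J\|_{\infty}$, the growth constants $c_{1},c_{2},k_{1},k_{2}$, and the Lipschitz constants $L_{f}$ and $L_{g}(R)$.
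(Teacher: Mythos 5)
Your decomposition and the order in which you transfer the Lipschitz property ($u\mapsto f\circ u$, then $K$, then $g$) is exactly the paper's: the paper also uses the smoothing of $K$ to confine the argument of $g$ to a fixed interval $[-l,l]$, takes the local Lipschitz constant $N$ of $g$ there, applies estimate (\ref{estimateL1}) to peel off $K$, and reduces to the Nemitskii estimate $\|f\circ u-f\circ v\|_{L^p}\le M\|u-v\|_{L^p}$, arriving at the constant $N\beta M$, which is your $\beta L_g(R)\|J\|_1 L_f$. Your pointwise bound $R=\beta\|J\|_\infty A(M)+\beta h$ obtained from (\ref{estimateLq}) with exponent pair $(1,\infty)$ is in fact the correct uniform bound for the argument of $g$; the paper's choice of $l$ built from $\|J\|_p$ is only an $L^p$ bound and is slightly off for this purpose, so on that sub-step your version is the more careful one.

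The obstacle you single out for $1\le p<\infty$ is genuine, and it is precisely the point where the paper's own proof is defective: the paper deduces from $\|u\|_{L^p(\Omega)}\le r$ that $\|u\|_{\infty}\le r|\Omega|^{-1/p}$, which is false (on a finite measure space the inequality runs the other way, $\|u\|_{L^p}\le|\Omega|^{1/p}\|u\|_\infty$), and then takes the Lipschitz constant of $f$ on the interval $[-r|\Omega|^{-1/p},r|\Omega|^{-1/p}]$. As you observe, a local Lipschitz condition on $f$ alone cannot make the Nemitskii map $u\mapsto f\circ u$ Lipschitz on $L^p$-balls, so the statement really requires reading ``Lipschitz continuous on bounded'' for $f$ as supplying one constant $M$ valid on all of $\mathbb{R}$ (equivalently, $f$ globally Lipschitz, which is consistent with the linear growth bound (\ref{dissipf})); with that reading your chain closes and your proof is complete. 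Your alternative suggestion of truncation plus the growth bound would not work on its own: (\ref{dissipf}) controls the size of $f$, not its increments, so no additional idea in that direction rescues the argument. In short, your proposal reproduces the paper's route, correctly diagnoses the one step the paper gets wrong, and identifies the hypothesis under which the proposition actually holds.
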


\begin{proof}
Suppose $1\leq p< \infty$ and let $u,v\in L^{p}(\Omega)$ 
be such that $\|u\|_{L^{p}(\Omega)}\leq r$ and
$\|v\|_{L^{p}(\Omega)}\leq r$. Then $\|u\|_{\infty} \leq r|\Omega|^{-\frac{1}{p}}$ and $\|v\|_{\infty} \leq r|\Omega|^{-\frac{1}{p}}$.
Let $M$ be the Lipschitz constant of $f$ in the interval $[-r|\Omega|^{-\frac{1}{p}}, ~r|\Omega|^{-\frac{1}{p}}]$. Then, for all $x\in\Omega$,
\begin{eqnarray*}
|f(u(x))-f(v(x))|\leq M|u(x)-v(x)|.
\end{eqnarray*}
From (\ref{estimateL1}) it follows that
\begin{eqnarray*}
\|Kf(u)-Kf(v)\|_{ L^{p}(\Omega) }&\leq&\|J\|_{1}\|f(u)-f(v)\|_{L^{p}(\Omega)}\\
&=&\left(\intop_{\Omega}|f(u(x))-f(v(x))|^{p}dx\right)^{\frac{1}{p}}\\
&\leq&\left(\intop_{\Omega} M^{p}|u(x)-v(x)|^{p}dx\right)^{\frac{1}{p}}\\
&=& M\|u-v\|_{L^p(\Omega)}.
\end{eqnarray*}
Now, if 
$
l=c_1\|J\|_{p}|\Omega|^{\frac{1}{q}}r+\|J\|_{ p}c_2|\Omega|
$
and $N$ denotes the Lipschitz constant of $g$ in the interval 
$[-l,l]\subset\mathbb{R}$, using \eqref{estimateLq}, we have that
\begin{eqnarray*}
\|F(u)-F(v)\|_{L^{p}(\Omega)}&\leq& N\beta\|K[f(u)-f(v)]\|_{ L^{p}(\Omega) }\\
&\leq& N\beta\|J\|_{1}\|K[f(u)-f(v)]\|_{L^{p}(\Omega)}\\
&\leq& N\beta M\|u-v\|_{L^p(\Omega)},
\end{eqnarray*}
showing that $F$ is Lipschitz in bounded sets of
${L^p(\Omega)}$ as claimed.
If $p=1$, the proof is similar, but simpler.
Suppose, finally, that $\|u\|_{L^{\infty}(\Omega)}\leq r, ~\|v\|_{L^{\infty}(\Omega)} \leq r$, 
let $M$ be the Lipschitz constant of $f$ and $N$ denotes the Lipschitz
constant of $g$ in the interval $[-l,l]\subset\mathbb{R}$, where now
$l=c_1\|J\|_{q}|\Omega|^{\frac{1}{q}}r+\|J\|_{ p}c_2|\Omega|$. 

Then, using \eqref{estimateLq}, we obtain
\[\|Kf(u)\|_{L^{\infty}(\Omega)}\leq N\|J\|_{1}\|f(u)\|_{\infty}=\|f(u)\|_{\infty}.\]
Whence, we obtain
\[\|F(u)-F(v)\|_{L^{\infty}(\Omega)}\leq N\beta M\|u-v\|_{\infty}.\]
\end{proof}

From Proposition \ref{wellp}, it follows from well known results, 
on ordinary differential equation in Banach 
space, that the problem (\ref{CP}) has a local solution for arbitrary 
initial condition in $X$. For the global existence, we need the following result
(\cite{ladas} - Theorem 5.6.1).

\begin{teo}\label{ladas} Let $X$ be a Banach space, and suppose that
$g:[t_0,\infty[\times X\to X$ is continuous and
$\|g(t,u)\|\leq h(t,\|u\|); ~\forall ~(t,u)\in
[t_0,\infty[\times X$, where $h:[t_0,\infty[
\times\mathbb{R}^{+}\to\mathbb{R}^{+}$ is continuous and
$h(t,r)$ is non decreasing in $r\geq 0$, for each
$t\in[t_0,\infty[$. Then, if the maximal solution
$r(t,t_0,r_0)$ of the scalar initial value problem
\[r'= h(t,r), ~r(t_0)=r_ 0,\]
exists throughout $[t_0, \infty[$, the maximal interval of 
existence of any solution $u(t,t_0,u_0)$ of the initial 
value problem \[\frac{du}{dt}=g(t,u), ~t \geq t_0, ~u(t_0)=u_0,\]
with $\|u_0\|\leq r_0$, also contains $[t_0, \infty[$.
\end{teo}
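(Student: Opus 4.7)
The plan is to argue by contradiction: assume the maximal interval of existence of $u(t,t_0,u_0)$ is $[t_0,T)$ with $T<\infty$, and derive a contradiction to maximality by showing that $u(t)$ remains bounded up to $T$, which (since $g$ is locally Lipschitz in the application of interest) allows the solution to be continued past $T$ by the standard local existence theorem.

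The core step is a norm comparison. Set $m(t):=\|u(t)\|$. From the triangle inequality applied to difference quotients, the right Dini derivative of the real-valued function $m$ satisfies
\[
D^{+} m(t) \;\leq\; \|u'(t)\| \;=\; \|g(t,u(t))\| \;\leq\; h(t,\|u(t)\|) \;=\; h(t,m(t)),
\]
where the last inequality uses the hypothesis on $g$. Moreover $m(t_0)=\|u_0\|\leq r_0$.

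The second step is a scalar comparison lemma: if a continuous $m:[t_0,T)\to\mathbb{R}^{+}$ satisfies $D^{+}m(t)\leq h(t,m(t))$ with $m(t_0)\leq r_0$, and $h(t,\cdot)$ is nondecreasing, then $m(t)\leq r(t,t_0,r_0)$ on $[t_0,T)$, where $r(\cdot,t_0,r_0)$ is the maximal solution of the scalar problem. The standard proof proceeds by perturbing $h$ to $h_{\varepsilon}(t,r):=h(t,r)+\varepsilon$, obtaining strict inequality and hence the pointwise estimate $m(t)<r_{\varepsilon}(t,t_0,r_0+\varepsilon)$ by a connectedness/first-crossing argument, and then letting $\varepsilon\to 0^{+}$, using continuous dependence of the maximal solution on $(h,r_0)$.

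Combining the two steps, on $[t_0,T)$ one has $\|u(t)\|\leq r(t,t_0,r_0)$. Since by hypothesis $r(\cdot,t_0,r_0)$ is defined on all of $[t_0,\infty)$, it is bounded on the compact interval $[t_0,T]$, so $\|u(t)\|\leq C:=\sup_{s\in[t_0,T]}r(s,t_0,r_0)<\infty$. The trajectory thus stays in a bounded subset of $X$; local Lipschitz continuity of $g$ on bounded sets then gives a uniform local existence time, so one can extend $u$ beyond $T$, contradicting maximality. The main obstacle in making this rigorous is the scalar comparison lemma under merely a Dini-derivative hypothesis (since $m$ need not be differentiable), which is the delicate part and is treated in the cited reference \cite{ladas}.
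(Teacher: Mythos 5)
The paper does not actually prove this statement: it is quoted as Theorem 5.6.1 of the cited monograph of Ladas and Lakshmikantham, so there is no in-paper proof to compare against. Your sketch is the standard textbook argument and is correct in outline; the Dini-derivative estimate $D^{+}\|u(t)\|\le\|u'(t)\|\le h(t,\|u(t)\|)$, the scalar comparison lemma (whose $\varepsilon$-perturbation, first-crossing proof you rightly flag as the delicate point), and the resulting bound $\|u(t)\|\le r(t,t_0,r_0)$ on $[t_0,T)$ are all sound. The one place where your route differs from the reference's is the continuation step: you pass from boundedness of the trajectory to extendability via local Lipschitz continuity of $g$ on bounded sets, an assumption not present in the statement. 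The reference instead integrates the differential inequality to obtain, for $t_0\le t_1<t_2<T$, the estimate $\|u(t_2)-u(t_1)\|\le\int_{t_1}^{t_2}h(s,\|u(s)\|)\,ds\le\int_{t_1}^{t_2}h(s,r(s))\,ds=r(t_2)-r(t_1)$, using the monotonicity of $h(t,\cdot)$ and the comparison $\|u(s)\|\le r(s)$; since $r$ is continuous at $T$, this gives the Cauchy criterion, so $\lim_{t\to T^-}u(t)$ exists and the solution is continued from that limit point by local existence. That version needs only local solvability (which in a general Banach space is itself an assumption for merely continuous $g$), whereas yours imports Lipschitz continuity on bounded sets --- harmless for this paper, where the right-hand side $-u+F(u)$ is indeed Lipschitz on bounded sets by Proposition \ref{wellp}, but strictly a proof of a slightly weaker theorem than the one stated.
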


\begin{coro}\label{globalexist}
Suppose, the same hypotheses from Proposition \ref{wellp}.
Then the problem (\ref{CP}) has a unique globally defined
solution for arbitrary initial condition in $X$, which is  given, 
for $t\geq t_0$, by the ``variation of constants formula''
\begin{eqnarray}\label{EP_1}
u(t,x)=
\begin{cases}
e^{-(t-t_0)}u_{0}(x)+\displaystyle\intop_{t_0}^{t}e^{-(t-s)}g(\beta Kf(u(s,\cdot))(x)+\beta h)ds,& x\in\Omega,\\
0, & x\in \mathbb{R}^N \setminus\Omega.
\end{cases}
\end{eqnarray}
\end{coro}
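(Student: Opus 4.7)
The plan is to combine three ingredients: (i) local existence and uniqueness from the Lipschitz property in Proposition \ref{wellp}, (ii) the a priori linear growth bound on $F$ already obtained inside the proof of Proposition \ref{limitationF}, and (iii) the Banach-space blow-up alternative expressed in Theorem \ref{ladas} to bootstrap the local solution to a global one.

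First I would observe that the right-hand side of (\ref{CP}), namely $G(u):=-u+F(u)$, is (by Proposition \ref{wellp}) Lipschitz on bounded subsets of $X$, since the identity on $X$ is globally Lipschitz and $F$ is Lipschitz on bounded sets. Standard Picard-Lindel\"of theory in Banach spaces then produces a unique maximal solution $u\in \mathscr{C}^{1}([t_0,T_{\max}),X)$ for every $u_0\in X$. Uniqueness on the maximal interval follows from a Gronwall argument on any bounded subinterval, using the local Lipschitz constant.

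Next I would verify the hypothesis of Theorem \ref{ladas}. The estimate (\ref{estFLp}) (and its $p=\infty$ analogue at the end of the proof of Proposition \ref{limitationF}) can be written as $\|F(u)\|_{X}\le A\|u\|_{X}+B$ with
\[
A=\beta k_{1}c_{1}\|J\|_{p}|\Omega|^{\frac{1}{q}},\qquad
B=\beta k_{1}\|J\|_{p}c_{2}|\Omega|+(k_{1}\beta h+k_{2})|\Omega|^{\frac{1}{p}},
\]
so that $\|G(u)\|_{X}\le (1+A)\|u\|_{X}+B$. Setting $h(t,r):=(1+A)r+B$, the function $h$ is continuous, independent of $t$, and non-decreasing in $r$, and the scalar Cauchy problem $r'=(1+A)r+B$, $r(t_0)=\|u_0\|_X$, has the explicit global solution on $[t_0,\infty)$. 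Theorem \ref{ladas} therefore forces $T_{\max}=\infty$.

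Finally I would derive the variation of constants formula by multiplying the equation $\partial_t u+u=F(u)$ by the integrating factor $e^{t}$, obtaining $\frac{d}{dt}(e^{t}u)=e^{t}F(u)$, and integrating from $t_0$ to $t$; the boundary condition $u(x,t)=0$ for $x\in\mathbb{R}^{N}\setminus\Omega$ is automatic since the whole evolution takes place in $X$ and, by the very definition (\ref{mapF}), $F(u)$ already vanishes outside $\Omega$. I do not expect a serious obstacle here: the only point that requires minor care is that the linear-growth estimate has slightly different constants in the cases $1\le p<\infty$ and $p=\infty$, but in both cases the bound is affine in $\|u\|_X$, which is all Theorem \ref{ladas} needs.
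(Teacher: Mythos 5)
Your proposal is correct and follows essentially the same route as the paper: local well-posedness from the Lipschitz property in Proposition \ref{wellp}, the affine bound $\|-u+F(u)\|_{X}\le(1+A)\|u\|_{X}+B$ extracted from the estimate (\ref{estFLp}) (with its $p=\infty$ analogue), and Theorem \ref{ladas} applied with $h(t,r)=(1+A)r+B$ to obtain global existence, followed by a direct verification of the variation of constants formula. No substantive differences.
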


\begin{proof}   
From Proposition \ref{wellp}, it follows that the right-hand-side of
(\ref{CP}) is Lipschitz continuous in bounded sets of
$X$ and, therefore, the Cauchy
problem (\ref{CP}) is well posed in $X$, with a unique local solution $u(t,x)$,
given by (\ref{EP_1}) (see \cite{DK}).

If $1\leq p<\infty$, from (\ref{estFLp}), we obtain  
that the right-hand-side of (\ref{CP}) satisfies 
\[\|-u+F(u)\|_{L^{p}(\Omega)}\leq (1+\beta
k_1c_1\|J\|_{p}|\Omega|^{\frac{1}{q}})\|u\|_{L^p(\Omega)}
+\beta k_1\|J\|_{ p}c_2|\Omega|+(k_1\beta h+k_2)|\Omega|^{\frac{1}{p}}.\]

If $p=\infty$, we have that the right-hand-side of (\ref{CP}) satisfies 
\[\|-u+F(u)\|_{L^{p}(\Omega)}\leq\beta(1+ k_1\beta c_1)\|u\|_{\infty}+k_1(\beta c_2+\beta h)+k_2.\]

Hence, defining $h:[t_0,\infty[\times\mathbb{R}^{+}\to\mathbb{R}^{+}$, by 
$$
h(t,r)=(1+\beta k_1 c_1\|J\|_{p}|\Omega|^{\frac{1}{q}})r+\beta k_1\|J\|_{ p}c_2|\Omega|+(k_1\beta h+k_2)|\Omega|^{\frac{1}{p}},
$$ 
if $1\leq p< \infty$ or by 
$$
h(t,r)=\beta(1+k_1\beta c_1)r+k_1(\beta c_2+\beta h)+k_2,
$$ 
in the case $p=\infty$, it follows that  (\ref{CP})
satisfies the hypotheses from Theorem \ref{ladas} and the global existence
follows immediately. The variation of constants formula may be
verified by direct derivation. 
\end{proof}

\section{Smoothness of the solutions}\label{Smoothness}

In this section, in addition the hypotheses from previous section, we assume that
the functions $g,f\in \mathscr{C}^{1}(\mathbb{R})$, and $g'$ and $f'$ are
locally Lipschitz and there exist non negative constants 
$k_{3},~k_{4},~c_3$ and $c_4$, such that
\begin{equation}\label{dissipglinha}
|g'(x)|\leq k_{3}|x|+k_{4}, ~\forall,~ x\in\mathbb{R},
\end{equation}
\begin{equation}\label{dissipflinha}
|f'(x)|\leq c_{3}|x|+c_{4}, ~\forall,~ x\in\mathbb{R}.
\end{equation}

The following result has been proven in \cite{Rall}.

\begin{prop}\label{PropRall}
Let $X$ and $Y$ be normed linear spaces, $F:X\to Y$ a map
and suppose that the Gateaux's derivative of $F, DF:X\to
{\cal L}(X,Y)$ exists and is continuous at $x\in X$. Then the
Fréchet's derivative $F'$ of $F$ exists and is continuous at $x$.
\end{prop}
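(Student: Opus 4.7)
The plan is to show that, for $h$ small in $X$, the remainder $F(x+h)-F(x)-DF(x)h$ is $o(\|h\|)$ in $Y$; since the candidate Fréchet derivative is forced to be the Gateaux derivative $DF(x)$, continuity of $F'$ at $x$ is then immediate from the hypothesis that $DF$ is continuous at $x$. The whole argument is an application of the mean value inequality for vector-valued maps.

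First I would introduce, for a fixed admissible $h\in X$, the auxiliary map $\varphi:[0,1]\to Y$ defined by
\[
\varphi(t)=F(x+th)-t\,DF(x)h.
\]
Because the Gateaux derivative of $F$ exists in a neighborhood of $x$, the function $\varphi$ is differentiable on $[0,1]$ for $\|h\|$ sufficiently small, with derivative
\[
\varphi'(t)=DF(x+th)h-DF(x)h=\bigl(DF(x+th)-DF(x)\bigr)h.
\]
Next I would invoke the mean value inequality in normed spaces, which states that a continuously differentiable curve $\varphi:[0,1]\to Y$ satisfies $\|\varphi(1)-\varphi(0)\|\le \sup_{t\in[0,1]}\|\varphi'(t)\|$. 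Applied here this yields
\[
\bigl\|F(x+h)-F(x)-DF(x)h\bigr\|_Y\le \sup_{t\in[0,1]}\bigl\|DF(x+th)-DF(x)\bigr\|_{\mathcal{L}(X,Y)}\,\|h\|_X.
\]

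From the assumption that $DF:X\to \mathcal{L}(X,Y)$ is continuous at $x$, the supremum on the right tends to $0$ as $\|h\|\to 0$, so the left-hand side is $o(\|h\|)$. This is precisely the definition of Fréchet differentiability of $F$ at $x$, and it gives simultaneously $F'(x)=DF(x)$. Finally, since $F'(y)=DF(y)$ at every $y$ where Fréchet differentiability holds (in particular in a neighborhood of $x$ by the same argument, shrinking if necessary so that $DF$ is still defined), continuity of $F'$ at $x$ coincides with the hypothesized continuity of $DF$ at $x$.

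The only delicate step is the mean value inequality for a map into the normed space $Y$, which is not automatic from the scalar version; the standard way around it is to apply Hahn--Banach, pairing $\varphi(1)-\varphi(0)$ with a norming functional $\ell\in Y^\ast$ of unit norm and then invoking the ordinary scalar mean value theorem for $t\mapsto \ell\bigl(\varphi(t)\bigr)$. Apart from that, the argument is routine, and I expect no substantive obstacles.
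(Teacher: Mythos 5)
The paper does not prove this proposition at all; it is quoted as a known result with a citation to Rall's \emph{Nonlinear Functional Analysis and Applications}, so there is no in-paper argument to compare against. Your proof is the standard one and is essentially correct: reduce to the curve $\varphi(t)=F(x+th)-t\,DF(x)h$, bound $\|\varphi(1)-\varphi(0)\|$ by $\sup_t\|\varphi'(t)\|$ via Hahn--Banach plus the scalar mean value theorem, and let continuity of $DF$ at $x$ kill the supremum. Two small points of hygiene: you state the mean value inequality for \emph{continuously} differentiable curves, but your $\varphi$ need not have continuous derivative (continuity of $DF$ is assumed only at $x$, so $t\mapsto DF(x+th)h$ need not be continuous); fortunately the Hahn--Banach reduction you sketch requires only differentiability on $(0,1)$ and continuity on $[0,1]$, both of which follow from Gateaux differentiability of $F$ along the segment, so you should state the lemma in that weaker form. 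Second, your parenthetical claim that Fréchet differentiability holds in a whole neighborhood of $x$ does not follow from your argument, since running it at a nearby point $y$ would require continuity of $DF$ at $y$; but this is immaterial for the conclusion, because wherever $F'$ exists it coincides with $DF$, and $DF$ is continuous at $x$ by hypothesis, which is all the final sentence of the proposition can reasonably mean.
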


Using Proposition \ref{PropRall}, we have the following result:

\begin{prop}\label{C1flow}
Suppose, in addition to the hypotheses of Corollary \ref{globalexist}
that the function $g$ and $f$ have derivative satisfying (\ref{dissipglinha}) and
(\ref{dissipflinha}), respectively. Then $F$ is 
continuously Fréchet differentiable on $X$ 
with derivative given by
\begin{eqnarray*}
DF(u)v(x):=
\begin{cases}
-v(x)+g'(\beta Kf(u)(x)+\beta h)\beta K f'(u(x))v(x), & x\in\Omega,\\
 0, & x\in\mathbb{R}^N\setminus\Omega.
 \end{cases}
\end{eqnarray*}
\end{prop}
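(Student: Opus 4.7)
The plan is to invoke Proposition \ref{PropRall}: it suffices to exhibit the Gâteaux derivative of $F$ at every $u\in X$, verify that it coincides with the linear operator displayed in the statement and is bounded on $X$, and then show that $u\mapsto DF(u)$ is continuous from $X$ into ${\cal L}(X,X)$. Automatic Fréchet differentiability together with continuity of $F'$ then follow from that proposition.

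For the Gâteaux derivative, I would fix $u,v\in X$ and expand $F(u+tv)$ for small $t\neq 0$. Using $f\in\mathscr{C}^{1}$ I write
$$f(u(x)+tv(x))-f(u(x))=tf'(u(x))v(x)+t\rho_t(x),$$
with $\rho_t(x)\to 0$ pointwise; the growth condition \eqref{dissipflinha} provides a dominating function and Lebesgue's dominated convergence yields $\tfrac{1}{t}(f(u+tv)-f(u))\to f'(u)v$ in $L^{1}(\Omega)$. Applying $K$ and invoking \eqref{estimateLp} transfers this convergence into $L^{p}(\Omega)$ (and \eqref{estimateLq} handles the case $p=\infty$); moreover \eqref{estimateLq} keeps $\beta Kf(u)+\beta h$ uniformly bounded in $L^{\infty}$. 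A second first-order expansion of $g$, with the remainder controlled through \eqref{dissipglinha}, then gives
$$\tfrac{1}{t}\bigl(F(u+tv)-F(u)\bigr)\longrightarrow g'\bigl(\beta Kf(u)+\beta h\bigr)\cdot\beta K(f'(u)v)\quad\text{in }X,$$
which is the announced formula. Linearity in $v$ is clear, and the bound $\|DF(u)v\|_{L^p}\le C(\|u\|_{L^p})\|v\|_{L^p}$ follows from the same ingredients: the $L^{\infty}$-bound on the multiplier $g'\bigl(\beta Kf(u)+\beta h\bigr)$ supplied by \eqref{estimateLq} together with \eqref{dissipglinha}, and the control of $K(f'(u)v)$ in $L^{p}$ furnished by \eqref{estimateLp} combined with Hölder's inequality and \eqref{dissipflinha}.

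For the continuity of $u\mapsto DF(u)$, I would take $u_n\to u$ in $X$ and split
$$DF(u_n)v-DF(u)v=A_n\,\beta K(f'(u_n)v)+g'\bigl(\beta Kf(u)+\beta h\bigr)\,\beta\bigl[K(f'(u_n)v)-K(f'(u)v)\bigr],$$
where $A_n=g'\bigl(\beta Kf(u_n)+\beta h\bigr)-g'\bigl(\beta Kf(u)+\beta h\bigr)$. The factor $A_n$ is controlled pointwise by the local Lipschitz property of $g'$ applied to $Kf(u_n)-Kf(u)$, which tends to zero in $L^{\infty}$ via \eqref{estimateLq} and the convergence $f(u_n)\to f(u)$ in $L^{p}$ (itself coming from the local Lipschitz property of $f$ and the boundedness of $\{u_n\}$). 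The second bracket is rewritten as $K((f'(u_n)-f'(u))v)$ and estimated in $L^{p}$ by \eqref{estimateLp} together with Hölder's inequality and the local Lipschitz hypothesis on $f'$. The main obstacle is performing these estimates uniformly in $v$ with $\|v\|_{L^p}\le 1$: this forces one to use the boundedness of $\{u_n\}$ to fix, once and for all, a compact interval on which the local Lipschitz constants of $f'$ and $g'$ are valid, and to balance the Hölder exponents so that the smoothing action $L^{1}\to L^{p}$ of $K$ given by \eqref{estimateLp} can absorb products such as $(f'(u_n)-f'(u))v$ whose pointwise factors individually lie only in $L^{p}$.
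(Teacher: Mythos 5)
Your overall strategy is the same as the paper's: compute the Gateaux derivative, check that it is a bounded linear operator on $X$, prove continuity of $u\mapsto DF(u)$, and conclude Fr\'echet differentiability from Proposition \ref{PropRall}. Your splitting of $DF(u_n)v-DF(u)v$ into a $g'$-difference term and an $f'$-difference term is exactly the paper's decomposition into the terms $I$ and $II$, and your treatment of the $g'$-difference (local Lipschitz continuity of $g'$ applied to $Kf(u_n)-Kf(u)$, which \eqref{estimateLq} controls in $L^{\infty}$) matches the paper.

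There is, however, a genuine gap in your handling of the $f'$-difference term. You propose to ``fix, once and for all, a compact interval on which the local Lipschitz constants of $f'$ and $g'$ are valid'' using the boundedness of $\{u_n\}$. This works for $g'$, whose argument $\beta Kf(u_n)+\beta h$ is uniformly bounded in $L^{\infty}$ thanks to the smoothing estimate \eqref{estimateLq}; but the argument of $f'$ is $u_n$ itself, which is not acted on by $K$, and boundedness of $\{u_n\}$ in $L^{p}(\Omega)$ for $p<\infty$ gives no pointwise bound whatsoever on the values $u_n(x)$. Hence there is no compact interval containing those values, and the local Lipschitz constant of $f'$ cannot be invoked uniformly; the step as described fails for every finite $p$. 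The paper avoids the Lipschitz property of $f'$ at this point altogether: after H\"older reduces the problem to showing $\|f'(u_n)-f'(u)\|_{L^{q}(\Omega)}\to 0$, it uses the growth condition \eqref{dissipflinha} to produce the dominating function $[c_3(|u_n|+|u|)+2c_4]^{q}$ and concludes by Lebesgue's dominated convergence theorem; you should replace your Lipschitz step by that argument. Finally, the ``balancing of H\"older exponents'' you flag as the main obstacle is indeed not resolved in your sketch: the bound $\|(f'(u_n)-f'(u))v\|_{L^{1}}\le\|f'(u_n)-f'(u)\|_{L^{q}}\|v\|_{L^{p}}$ requires $f'(u_n)\in L^{q}(\Omega)$, hence via \eqref{dissipflinha} that $u_n\in L^{q}(\Omega)$, which for $1<p<2$ (so $q>p$) does not follow from $u_n\in L^{p}(\Omega)$ even on a bounded domain; this integrability issue is present in the paper's own proof as well and would need an additional hypothesis or a restriction on $p$ to be closed.
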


\begin{proof}
From a simple computation, using the fact that $f$ is continuously
differentiable on $\mathbb{R}$, it follows that the Gateaux's
derivative of $F$ is given by
\begin{eqnarray*}
DF(u)v(x):=
\begin{cases}
-v(x)+g'(\beta Kf(u)(x)+\beta h)\beta K f'(u(x))v(x), & x\in\Omega,\\
 0, & x\in\mathbb{R}^N\setminus\Omega.
 \end{cases}
\end{eqnarray*}

The operator $DF(u)$ is clearly a linear operator in $X$.

Suppose $1\leq p<\infty $ and $q$ is the conjugate exponent of $p$.
Then, if $u\in L^p(\Omega)$, using \eqref{dissipglinha} and \eqref{estimateLq}, we obtain
\begin{eqnarray*} 
&&\|g'(\beta Kf(u)+\beta h)\beta Kf'(u)v\|_{ L^p(\Omega) }\leq
\left\{\intop_{\Omega}|g'(\beta K(f(u)(x))+\beta h)\beta K(f'(u(x)))v(x)|^{p} dx\right\}^{\frac{1}{p}}\nonumber\\
&&\leq \bigg\{\intop_{\Omega} \bigg[ k_3\beta|K(f(u)(x))| +k_3\beta h+k_4\bigg]^{p}\beta^{p}|K(f'(u(x)))v(x)|^{p}dx\bigg\}^{\frac{1}{p}}\nonumber\\
&&\leq \bigg\{\intop_{\Omega}[k_3\beta\|J\|_{q}\|f(u)\|_{L^{p}(\Omega)}+k_3\beta
       h+k_4]^{p} \beta^{p}[\|J\|_{q}\|f'(u)\|_{L^{p}(\Omega)}|v(x)|^{p}dx\Bigg\}^{\frac{1}{p}}.\nonumber\\
\end{eqnarray*}
Thus, from \eqref{dissipf} and \eqref{dissipflinha}, we have
\begin{eqnarray}\label{estglinhap}
&&\|g'(\beta Kf(u)+\beta h)\beta Kf'(u)v\|_{L^p(\Omega)}\leq \nonumber  \\
&&\leq \Bigg\{\intop_{\Omega}[k_3\beta\|J\|_{q}(c_1\|u\|_{L^{p}(\Omega)}+c_2|\Omega|^{\frac{1}{p}})+k_3\beta h+k_4]^{p}
\beta^{p}[\|J\|_{q}(c_3\|u\|_{L^{p}(\Omega)}+c_4|\Omega|^{\frac{1}{p}})|v(x)|^{p}dx\Bigg\}^{\frac{1}{p}}\nonumber  \\
&&=(k_3\beta\|J\|_{q}(c_1\|u\|_{L^{p}(\Omega)}+c_2|\Omega|^{\frac{1}{p}})+k_3\beta
    h+k_4)    \beta\|J\|_{q}(c_3\|u\|_{L^{p}(\Omega)}+c_4|\Omega|^{\frac{1}{p}})\|v\|_{L^{p}(\Omega)}.
\end{eqnarray}
From (\ref{estglinhap}), we have
\[\|DF(u) v\|_{L^p(\Omega)}=\left(k_3\beta
\|J\|_{q}\left(c_1\|u\|_{L^{p}(\Omega)}+c_2|\Omega|^{\frac{1}{p}}\right)
+k_3\beta h+k_4 \right)\beta\|J\|_{q}\left(c_3\|u\|_{L^{p}(\Omega)}+c_4|\Omega|^{\frac{1}{p}}\right)\|v\|_{L^{p}(\Omega)},\]
showing that $DF(u)$ is a bounded operator.
In the case $p=\infty$, we have that  
\begin{eqnarray*}\label{estglinhainfinito}
\|DF(u)v\|_{L^{\infty}(\Omega)}&=&\|g'(\beta Kf(u) +\beta h)\beta Kf'(u)v\|_{\infty}\\
&\leq&(k_3\beta\|Kf(u)\|_{\infty}+k_3\beta h+k_4)\beta\|K\circ (f'(u))\|_{\infty}\|v\|_{\infty}\\
&\leq&(k_3\beta\|J\|_{1}(c_1\|u\|_{L^{\infty}(\Omega)}+c_2)+k_3\beta h+k_4)\beta\|J\|_{1}(c_3\|u\|_{L^{\infty}(\Omega)}+c_4)\|v\|_{\infty}\\
&\leq&(k_3\beta (c_1\|u\|_{L^{\infty}(\Omega)}+c_2)+k_3\beta
       h+k_4)\beta (c_3\|u\|_{L^{\infty}(\Omega)}+c_4)\|v\|_{\infty}
\end{eqnarray*}
showing the boundeness of $DF(u)$ also in this case.

Suppose now that $u_1, u_2$ and $v$ belong to $L^p(\Omega), ~1\leq p<\infty$. Then
\begin{eqnarray*}
&&\|(DF(u_1)-DF(u_2))v\|_{L^p(\Omega)}=\\
&&=\|g'(\beta Kf(u_1)+\beta h)\beta Kf'(u_1)v-g'(\beta Kf(u_2)+\beta h)\beta Kf'(u_2)v\|_{L^p(\Omega)}\\
&&\leq I+II,
 \end{eqnarray*}
where
\[I=\|[g'(\beta Kf(u_1)+\beta h)-g'(\beta Kf(u_2)+\beta h)]\beta Kf'(u_1) v\|_{L^p(\Omega)}\]
and
\[II=\|g'(\beta Kf(u_2)+\beta h)\beta K([f'(u_1)-f'(u_2)])v\|_{L^p(\Omega)}.\]
	
Fixed $u_1\in L^{p}(\Omega)$ and letting $u_2\rightarrow u_1$ in
$L^{p}(\Omega)$ follows that 
$\beta Kf(u_2)+\beta h$ is in a ball of $L^{\infty}$ centered   
$\beta Kf(u_1)+\beta h$. Then, since $g'$ is locally
Lipschitz, there exists $C>0$, such that
\begin{eqnarray*}
|g'(\beta Kf(u_1)+\beta h)(x)-g'(\beta Kf(u_2)+\beta h)(x)|&\leq& C\beta|K[f(u_1)-f(u_2)](x)|\\
&\leq& C\beta\|J\|_{q}\|u_1-u_2\|_{L^p(\Omega)}.
\end{eqnarray*}
Thus, using (\ref{estimateLq}), we have that
\begin{eqnarray*}
I &\leq&\left(\intop_{\Omega}|(C\beta\|J\|_{q}\|u_1-u_2\|_{L^p(\Omega)})^{p}\beta^{p}|Kf'(u_1)(x)|^{p}|v(x)|^{p}\right)^{\frac{1}{p}}\\
&\leq&C\beta\|J\|_{q}\|u_1-u_2\|_{L^p(\Omega)}\beta\left(\intop_{\Omega}|Kf'(u_1)(x)|^{p}|v(x)|^{p}\right)^{\frac{1}{p}}\\
&\leq&C\beta^{2}\|J\|_{q}\|u_1-u_2\|_{L^p(\Omega)}\left(\intop_{\Omega}[\|J\|_{q}\|f'(u_1)\|_{L^{p}(\Omega)}]^{p}|v(x)|^{p}\right)^{\frac{1}{p}}.
\end{eqnarray*}
But, from (\ref{dissipflinha}) follows that
\[\|f'(u_1)\|_{L^{p}(\Omega)}\leq c_3\|u_1\|_{L^{p}(\Omega)}+c_4|\Omega|^{\frac{1}{p}}.\]
Hence,
\begin{eqnarray}\label{estimateI}
I&\leq&C\beta^{2}\|J\|_{q}\|u_1-u_2\|_{L^p(\Omega)}\|J\|_{q}(c_3\|u_1\|_{L^{p}(\Omega)}+c_4|\Omega|^{\frac{1}{p}})\|v\|_{L^{p}(\Omega)}.
\end{eqnarray}

Now, using (\ref{dissipglinha}) and (\ref{estimateLq}), we obtain
\begin{eqnarray*}
|g'(\beta Kf(u_2)(x))+\beta h)|&\leq& k_3\beta|Kf(u_2(x))|+k_3\beta h+k_4\\
&\leq& k_3\beta\|J\|_{q}\|f(u_2)\|_{L^p(\Omega)}+k_3\beta h+k_4\\
&\leq& k_3\beta\|J\|_{q}\left(c_1\|u_2\|_{L^p(\Omega)}+c_2|\Omega|^{\frac{1}{p}}\right)+k_3\beta h+k_4.
\end{eqnarray*}
Whence we obtain
\begin{eqnarray*}
II&\leq&[k_3\beta\|J\|_{q}(c_1\|u_2\|_{L^p(\Omega)}+c_2|\Omega|^{\frac{1}{p}})+k_3\beta h+k_4]\beta\|K[f'(u_1)-f'(u_2)]\|_{L^{p}(\Omega)}.
\end{eqnarray*}

Using (\ref{estimateLp}) and Hölder inequality, we have
\begin{eqnarray}\label{estimateII}
II&\leq&\left[k_3\beta\|J\|_{q}\left(c_1\|u_2\|_{L^p(\Omega)}+c_2|\Omega|^{\frac{1}{p}}\right)+k_3\beta h+k_4\right]\beta\|J\|_{p} 
\|[f'(u_1)-f'(u_2)]v\|_{L^{1}(\Omega)}\\
&\leq&\left[k_3\beta\|J\|_{q}\left(c_1\|u_2\|_{L^p(\Omega)}+c_2|\Omega|^{\frac{1}{p}}\right)+k_3\beta h+k_4\right]\beta\|J\|_{p} 
\|[f'(u_1)-f'(u_2)]v\|_{L^{q}(\Omega)}\|v\|_{L^{p}(\Omega)}\nonumber.
\end{eqnarray}
From \eqref{estimateI} and \eqref{estimateII}, follow that 
\begin{eqnarray*}
&&\|[DF(u_1)-DF(u_2)]v\|_{L^p(\Omega)} \leq\\
&&\leq c\beta^{2}\|J\|_{q}\|u_1-u_2\|_{L^p(\Omega)}\|J\|_{q}
\left(c_3\|u_1\|_{L^{p}(\Omega)}+c_4|\Omega|^{\frac{1}{p}}\right)\|v\|_{L^{p}(\Omega)}\\ 
&&+\left[k_3\beta\|J\|_{q}(c_1\|u_2\|_{L^p(\Omega)}+c_2|\Omega|^{\frac{1}{p}})+k_3\beta h+k_4\right]\beta\|J\|_{p} 
\|f'(u_1) - f'(u_2)v\|_{L^{q}(\Omega)}\|v\|_{L^{p}(\Omega)}.
 \end{eqnarray*}
Thus, to prove continuity of the derivative, we only have to show that
$$\|f'(u_1)-f'(u_2)\|_{L^q(\Omega)}\to 0$$  
when
$$\|u_1-u_2\|_{L^p(\Omega)}\to 0.$$
But, from the growth condition on $f'$ it follows that
$$|f'(u_1)(x)-f'(u_2)(x)|^q\leq[c_3(|u_1(x)|+|u_2(x)|)+2c_4]^q$$ 
and a simple computation show that the right-hand is in
$L^{1}(\Omega)$. Then  the result follows from Lebesgue's Convergence Theorem.

In the case $p=\infty$, from \eqref{estimateL1}, we obtain 
\begin{eqnarray*}
&&\|[DF(u_1)-DF(u_2)]v\|_{L^{\infty}(\Omega)}\leq  \\
&&\leq c\beta\|K[f'(u_1)-f'(u_2)]\|_{L^{\infty}(\Omega)}\beta\|Kf'(u_1)v\|_{\infty} \\
 &&+(k_3\beta\|Kf(u_2)\|_{\infty}+k_3\beta h+k_4)\beta\|K[f'(u_1) - f'(u_2)]\|_{L^{\infty}(\Omega)}\|v\|_{L^{\infty}(\Omega)}\\
&&\leq c\beta\|J\|_{1}\|f'(u_1)-f'(u_2)\|_{L^{\infty}(\Omega)}\beta\|J\|_{1}\|f'(u_1)\|_{\infty}\|v\|_{\infty}\\
&&+(k_3\beta\|J\|_{1}\|f(u_2)\|_{\infty}+k_3\beta h+    k_4)\beta\|J\|_{1}\|f'(u_1) - f'(u_2)\|_{L^{\infty}(\Omega)}\|v\|_{L^{\infty}(\Omega)}\\
&&\leq  c\beta\|f'(u_1)-f'(u_2)\|_{L^{\infty}(\Omega)}\beta(c_3\|u\|_{L^{\infty}(\Omega)}+c_4)\|v\|_{\infty}\\
&&+(k_3\beta (c_1\|u\|_{L^{\infty}(\Omega)}+c_2)+k_3\beta
   h+k_4)\beta\|f'(u_1) - f'(u_2)\|_{L^{\infty}(\Omega)}
\|v\|_{L^{\infty}(\Omega)}.
\end{eqnarray*}
And the continuity of $DF$ follows from the continuity of  $f'$.
Therefore, it follows from Proposition \ref{PropRall} that $F$ is Fréchet
differentiable with continuous derivative in $L^{p}(\Omega)$.
\end{proof}

\begin{Remark}
From Proposition \ref{C1flow}, it follows that the flow generated   
by (\ref{CP}), given by $(T(t)u_0)(x)=u(x,t)$, where $u(x,t)$ is 
given in (\ref{EP_1}), is $\mathscr{C}^1$ with respect to initial condition 
(see \cite{Henry}).
\end{Remark}

\section{Existence of a global attractor}\label{global}

We prove, in this section, the existence of a global maximal
invariant compact set ${\cal A}\subset X \equiv L^{p}(\Omega)$ for the flow
of (\ref{CP}), which attracts each bounded set of $X$
(the global attractor, see \cite{Hale} and \cite{Teman}).

We recall that a set ${\cal B} \subset X$ is an
absorbing set for the flow $T(t)$ if, for any bounded set $C
\subset X$, there is a $t_{1}>0$ such that $T(t)C
\subset {\cal B}$ for any $t\geq t_{1}$.

The following result was proven in \cite{Teman}.

\begin{teo}\label{TeoremaTeman}
Let $X$ be a Banach space and $T(t)$ a semigroup on $X$. Assume
that, for every $t, ~T(t)=T_1(t)+T_2(t)$, where the operators
$T_1(\cdot)$ are uniformly compact for $t$ sufficiently
large, that is, for
every bounded set $B$ there exists $t_{0}$, which may depend on
$B$, such that
$$\bigcup_{t \geq t_{0}} T_{1}(t)B$$
is relatively compact in $X$ and $T_2(t)$ is a continuous mapping
from $X$ into itself such that the following holds: For every
bounded set $C\subset X$,
$$r_c(t)=\sup_{\varphi\in C}\| T_2(t) \varphi\|_X\rightarrow 0\quad
\mbox{as} \quad t\rightarrow\infty.$$  
Assume also that there exists an open set ${\cal U}$ and bounded subset $\cal B$ of
$\cal U$ such that $\cal B$ is absorbing in $\cal U$. Then the
$\omega$-limit set of $\cal B, ~{\cal A}=\omega(\cal B)$, is a
compact attractor which attracts the bounded sets of $\cal U$. It
is the maximal bounded attractor in $\cal U$ (for the inclusion
relation). Furthermore, if $\cal U$ is convex and connected, then
$\cal A$ is connected. 
\end{teo}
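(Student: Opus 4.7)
The plan is to take $\mathcal{A}:=\omega(\mathcal{B})$ as the candidate attractor, where
\[
\omega(\mathcal{B})=\bigcap_{s\geq 0}\overline{\bigcup_{t\geq s}T(t)\mathcal{B}},
\]
and to verify each required property by exploiting the decomposition $T(t)=T_1(t)+T_2(t)$. The proof reduces to three ingredients: asymptotic compactness of $T(t)$ on bounded sets, attraction of every bounded subset of $\mathcal{U}$ by $\mathcal{A}$ (upgraded from attraction of $\mathcal{B}$ via the absorbing property), and connectedness of $\mathcal{A}$ under the extra hypothesis. In my view the main technical point is the asymptotic compactness statement; once it is in hand, everything else follows from standard $\omega$-limit set machinery.

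For asymptotic compactness, fix a bounded $C\subset X$, sequences $\varphi_n\in C$ and $t_n\to\infty$, and split
\[
T(t_n)\varphi_n=T_1(t_n)\varphi_n+T_2(t_n)\varphi_n.
\]
The uniform compactness hypothesis provides $t_0=t_0(C)$ with $\bigcup_{t\geq t_0}T_1(t)C$ relatively compact; since $t_n\geq t_0$ eventually, a subsequence $T_1(t_{n_k})\varphi_{n_k}$ converges to some $y\in X$. Simultaneously $\|T_2(t_{n_k})\varphi_{n_k}\|_X\leq r_C(t_{n_k})\to 0$, so $T(t_{n_k})\varphi_{n_k}\to y$. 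Applied to $C=\mathcal{B}$, this makes $\mathcal{A}$ nonempty and compact, positively invariant by continuity of each $T(t)$, and negatively invariant by a standard diagonal extraction on pre-image sequences $T(t_n-t)\varphi_n$. The same argument gives attraction of $\mathcal{B}$, and attraction of an arbitrary bounded $C\subset\mathcal{U}$ follows from the absorbing property: for $t\geq t_1(C)$ we have $T(t)C\subset\mathcal{B}$, so $T(t+\tau)C\subset T(\tau)\mathcal{B}$ and the Hausdorff semi-distance from $T(t+\tau)C$ to $\mathcal{A}$ is dominated by that from $T(\tau)\mathcal{B}$ to $\mathcal{A}$, which tends to zero. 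Maximality is immediate: any bounded invariant $\mathcal{Y}\subset\mathcal{U}$ is eventually absorbed by $\mathcal{B}$ yet satisfies $T(t)\mathcal{Y}=\mathcal{Y}$, so $\mathcal{Y}\subset\omega(\mathcal{B})=\mathcal{A}$.

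For the connectedness assertion, I would pass to the closed convex hull $\tilde{\mathcal{B}}$ of $\mathcal{B}$ inside $\mathcal{U}$, which is still bounded, still absorbing, and remains inside $\mathcal{U}$ by convexity. Each image $T(t)\tilde{\mathcal{B}}$ is then connected as a continuous image of a connected set, and continuity of the orbits $t\mapsto T(t)\varphi$ glues them so that $\overline{\bigcup_{t\geq s}T(t)\tilde{\mathcal{B}}}$ is connected for every $s\geq 0$. A nested intersection of nonempty compact connected subsets of a metric space is connected, giving connectedness of $\mathcal{A}$. The only delicate point here is verifying that the convex-hull modification preserves the absorbing property while staying inside $\mathcal{U}$, but convexity of $\mathcal{U}$ handles both.
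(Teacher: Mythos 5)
First, a remark on provenance: the paper does not prove this statement at all --- it is quoted verbatim from Temam's book (\cite{Teman}, Theorem I.1.1) and used as a black box, so there is no in-paper proof to compare against. Judged on its own merits, the core of your argument is the standard one and is correct: the splitting $T(t_n)\varphi_n=T_1(t_n)\varphi_n+T_2(t_n)\varphi_n$ gives asymptotic compactness on bounded sets, from which nonemptiness, compactness and invariance of $\omega(\mathcal{B})$, attraction of $\mathcal{B}$, the upgrade to arbitrary bounded subsets of $\mathcal{U}$ via the absorbing property, and maximality all follow exactly as you describe.

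The connectedness step, however, has a genuine gap as written. You invoke the lemma that a nested intersection of nonempty \emph{compact} connected sets is connected, applied to the sets $\overline{\bigcup_{t\geq s}T(t)\tilde{\mathcal{B}}}$; but in an infinite-dimensional Banach space these sets are merely closed and bounded, not compact, and the lemma is false without compactness (one can build a decreasing family of closed connected sets in the plane whose intersection is two disjoint rays). Two repairs are available: (i) observe that for large $s$ these sets lie in $K_s+B(0,\varepsilon_s)$ with $K_s$ compact and $\varepsilon_s\to 0$, so their Kuratowski measure of noncompactness tends to zero, which is enough to salvage the nested-intersection argument; or (ii) use the standard separation argument directly --- if $\mathcal{A}=A_1\sqcup A_2$ with $d(A_1,A_2)>0$, then for $t$ large the connected set $T(t)\bigl(\mathrm{conv}\,\mathcal{B}\bigr)$ both contains $\mathcal{A}$ and is contained in the union of two disjoint neighborhoods of $A_1$ and $A_2$, a contradiction. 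A second, minor, slip: you should work with the convex hull of $\mathcal{B}$ rather than its \emph{closed} convex hull, since $\mathcal{U}$ is only assumed open and convex and the closure may escape $\mathcal{U}$; the (non-closed) convex hull is already connected and bounded, which is all you need.
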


\begin{lemma}\label{absorbing}
Assume that (\ref{dissipg}) and (\ref{dissipf}) hold with $k_1\beta
c_1<1$. Then, any positive number $\sigma$,  
the ball of radius $$R=(1+\sigma)\left(\frac{k_1\beta c_2+k_1\beta
  h+k_2}{1-k_1 \beta
c_1}\right)$$ is an absorbing set for the flow $T(t)$
generated by (\ref{CP}).
\end{lemma}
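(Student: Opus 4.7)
The plan is to reduce the question to a scalar differential inequality for $\|u(t)\|_{L^p(\Omega)}$ and then invoke Gronwall. First, I would refine the estimate on $F$ from Proposition \ref{limitationF}: using \eqref{estimateL1} (which gives $\|Kw\|_{L^p(\Omega)}\leq\|J\|_1\|w\|_{L^p(\Omega)}\leq\|w\|_{L^p(\Omega)}$) instead of \eqref{estimateLp} to control $Kf(u)$, one obtains the cleaner form
\begin{equation*}
\|F(u)\|_{L^p(\Omega)}\leq k_1\beta c_1\,\|u\|_{L^p(\Omega)}+\widetilde{C},
\end{equation*}
where $\widetilde{C}$ collects the constants $k_1\beta c_2$, $k_1\beta h$, $k_2$ (together with the appropriate factor of $|\Omega|^{1/p}$). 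The crucial feature is that the coefficient multiplying $\|u\|_{L^p(\Omega)}$ is precisely the $k_1\beta c_1<1$ appearing in the hypothesis.

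Next, starting from $u_t=-u+F(u)$, I would differentiate $\|u(t)\|_{L^p(\Omega)}^p$ in time, apply Hölder's inequality to pair $|u|^{p-1}$ against $F(u)$ in $L^p(\Omega)$, and divide through by $p\,\|u(t)\|_{L^p(\Omega)}^{p-1}$ to obtain
\begin{equation*}
\frac{d}{dt}\|u(t)\|_{L^p(\Omega)}\leq -(1-k_1\beta c_1)\,\|u(t)\|_{L^p(\Omega)}+\widetilde{C}.
\end{equation*}
Setting $\alpha:=1-k_1\beta c_1>0$, Gronwall's lemma yields
\begin{equation*}
\|u(t)\|_{L^p(\Omega)}\leq e^{-\alpha(t-t_0)}\|u_0\|_{L^p(\Omega)}+\frac{\widetilde{C}}{\alpha}.
\end{equation*}
Given any bounded $B\subset X$ of radius $\rho$, choosing $t_1-t_0$ so large that $e^{-\alpha(t_1-t_0)}\rho\leq\sigma\widetilde{C}/\alpha$ then puts $T(t)B$ inside the ball of radius $(1+\sigma)\widetilde{C}/\alpha = R$ for every $t\geq t_1$, which is the required absorbing property.

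The case $p=\infty$ is handled by the analogous pointwise estimate derived at the end of the proof of Proposition \ref{limitationF}, applied directly to \eqref{EP_1}. The one real technical nuisance I anticipate is justifying the differentiation of $\|u(t)\|_{L^p(\Omega)}^p$ for $1<p<\infty$, since $y\mapsto|y|^p$ is only $\mathscr{C}^1$ and $u(\cdot,t)$ may vanish on sets of positive measure; I would sidestep this by working with the smooth regularization $(|u|^2+\varepsilon)^{p/2}$, obtaining the inequality for that quantity, and then letting $\varepsilon\downarrow 0$. As an alternative that avoids differentiation altogether, one can multiply the variation-of-constants formula \eqref{EP_1} by $e^t$ and apply the integral form of Gronwall directly to $\psi(t)=e^t\|u(t)\|_{L^p(\Omega)}$—slightly more bookkeeping, but no regularity hypothesis beyond what is already known.
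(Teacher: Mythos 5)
Your proposal follows essentially the same route as the paper: differentiate $\|u(\cdot,t)\|_{L^p(\Omega)}^p$ in time, control the nonlinear term by H\"older's inequality together with \eqref{dissipg}, \eqref{dissipf} and \eqref{estimateL1}, and arrive at a scalar differential inequality whose linear coefficient is $-(1-k_1\beta c_1)<0$. The only divergence is in the last step, where you close with an explicit Gronwall bound valid for all $t$, while the paper merely shows exponential decay of the norm as long as it exceeds the threshold $R$; your finish is, if anything, the tidier of the two, and your inclusion of the factor $|\Omega|^{1/p}$ in the constant matches the paper's computation rather than its (slightly mismatched) statement of $R$.
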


\begin{proof} 
If  $u(\cdot,t)$ is a solution of (\ref{CP}) with initial
condition $u(\cdot,0)$ then, for $1\leq p < \infty$, 
\begin{eqnarray*}
\frac{d}{dt}\intop_{\Omega}|u(x,t)|^pdx&=&\intop_{\Omega}p|u(x,t)|^{p-1}\sgn[u(x,t)]u_{t}(x,t)dx\\
&=&-p\intop_{\Omega}|u(x,t)|^{p}dx+p\intop_{\Omega}|u(x,t)|^{p-1}\sgn[u(x,t)]g(\beta Kf(u(x,t))+\beta h)dx.
\end{eqnarray*}
But, using Hölder inequality, (\ref{dissipg}) and (\ref{dissipf}), it follows that
\begin{eqnarray*}
&&\intop_{\Omega}|u(x,t)|^{p-1}\sgn[u(x,t)]g(\beta Kf(u(x,t))+\beta h)dx\leq\\
&&\leq\left(\intop_{\Omega}(|u(x,t)|^{p-1})^{q}dx\right)^{\frac{1}{q}}\left(\intop_{\Omega}|g(\beta Kf(u(x,t))+\beta h)|^{p}dx\right)^{\frac{1}{p}}\\
&&\leq\left(\intop_{\Omega}|u(x,t)|^{p}dx\right)^{\frac{1}{q}}\left(\intop_{\Omega}(k_1|\beta Kf(u(x,t))+\beta h|+k_2)^{p}dx\right)^{\frac{1}{p}}\\
&&\leq\|u(\cdot,t)\|_{L^{p}(\Omega)}^{p-1}\left(k_1\beta\|K(f(u(\cdot,t)))\|_{L^{p}(\Omega)}+\|k_1\beta h+k_2\|_{L^{p}(\Omega)}\right)\\
&&\leq \|u(\cdot,t)\|_{L^{p}(\Omega)}^{p-1} \left( k_1 \beta \|J\|_{1} \|f(u(\cdot,t))\|_{L^{p}(\Omega)} + ( k_1 \beta h +k_2)|\Omega|^{\frac{1}{p}}\right)\\
&&\leq \|u(\cdot,t)\|_{L^{p}(\Omega)}^{p-1} \left( k_1 \beta \left( c_1\|u(\cdot,t)\|_{L^{p}(\Omega)} + c_2 |\Omega|^{\frac{1}{p}}\right) + ( k_1 \beta h +k_2)|\Omega|^{\frac{1}{p}}\right)\\
&&= k_1\beta c_1\|u(\cdot,t)\|_{L^{p}(\Omega)}^{p}+\left(k_1\beta
    c_2|\Omega|^{\frac{1}{p}}+(k_1\beta h+k_2)|\Omega|^{\frac{1}{p}}
\right)\|u(\cdot,t)\|_{L^{p}(\Omega)}^{p-1}.
\end{eqnarray*}
Thus, we have that
\begin{eqnarray*}
\frac{d}{dt}\|u(\cdot,t)\|_{L^{p}(\Omega)}^{p}\leq -p\|u(\cdot,t)\|_{L^{p}(\Omega)}^{p}
&+&pk_1\beta c_1\|u(\cdot,t)\|_{L^{p}(\Omega)}^{p}\\
&+&p\left[k_1\beta c_2|\Omega|^{\frac{1}{p}}+(k_1 \beta h+k_2)|\Omega|^{\frac{1}{p}}\right]\|u(\cdot,t)\|_{L^{p}(\Omega)}^{p-1}\\
&=&p\|u(\cdot,t)\|_{L^{p}(\Omega)}^{p}\left[-1+k_1\beta c_1+\frac{\left[k_1 \beta c_2+k_1\beta h+k_2\right]
|\Omega|^{\frac{1}{p}}}{\|u(\cdot,t)\|_{L^{p}(\Omega)}}\right].
\end{eqnarray*}

Letting $\varepsilon=1-k_1\beta c_1$, when
\[\|u(\cdot,t)\|_{L^{p}(\Omega)}\geq(1+\sigma)\frac{\left(k_1\beta
    c_2+k_1\beta h+k_2\right)|
\Omega|^{\frac{1}{p}}}{\varepsilon},\] we have that
\[
\frac{d}{dt}\|u(\cdot,t)\|_{L^{p}(\Omega)}^{p}\leq p\|u(\cdot,t)\|_{L^{p}(\Omega)}^{p}\left(-\varepsilon+\frac{\varepsilon}{1+\sigma}\right)
=-p\frac{\sigma}{1+\sigma}\varepsilon\|u(\cdot,t)\|_{L^{p}(\Omega)}^{p}.\]
Therefore when
$\|u(\cdot,t)\|_{L^{p}(\Omega)}\geq(1+\sigma)\frac{\left(k_1\beta
    c_2+k_1\beta h+k_2\right)|\Omega|^{\frac{1}{p}}}{\varepsilon}$, 
\[
\|u(\cdot,t)\|_{L^{p}(\Omega)}^{p}\leq e^{-\frac{\varepsilon\sigma p}{1+\sigma}t}\|u(\cdot,0)\|_{L^{p}(\Omega)}
\leq e^{-\frac{\sigma p(1-k_1\beta c_1)}{1+\sigma}t}\|u(\cdot,0)\|_{L^{p}(\Omega)}\]
what concludes the proof. 
\end{proof}

The next result generalizes Theorem 3.3 of \cite{DaSilvaPereira}, 
Theorem 3.3 of \cite{FAH} and Theorem 8 of \cite{Silva3}.

\begin{teo}\label{attractor}
In addition of the hypotheses assumed in Lemma \ref{absorbing}, 
suppose that  (\ref{dissipglinha}) holds and lets 
$\|J_x\|_{r} =\sup_{x\in \Omega}\frac{\partial }{\partial x} \|J(x,
\cdot)\|_{L^{r}(\Omega)}.$ Then there
exists a global attractor $\cal A$ for the flow $T(t)$ generated
by (\ref{CP}) in $L^{p}(\Omega)$, which is contained in the
 ball of radius $R$.
\end{teo}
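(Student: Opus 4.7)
The plan is to apply Temam's Theorem \ref{TeoremaTeman} with a splitting suggested by the variation of constants formula \eqref{EP_1}. Writing $u(\cdot,t) = T(t)u_0$, I set
\[
T_2(t)u_0(x) := e^{-t}u_0(x), \qquad T_1(t)u_0(x) := \intop_0^t e^{-(t-s)} g\!\left(\beta K(f(u(s,\cdot)))(x) + \beta h\right) ds,
\]
extended by zero outside $\Omega$. The piece $T_2(t)$ is a contraction semigroup with $\|T_2(t)u_0\|_{L^p(\Omega)} \leq e^{-t}\|u_0\|_{L^p(\Omega)}$, so $r_C(t) \to 0$ as $t \to \infty$ uniformly on any bounded $C \subset X$. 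Lemma \ref{absorbing} already supplies the absorbing ball $\mathcal{B}$ of radius $R$, so I take $\mathcal{U} = X$ (convex and connected) and need only verify the uniform compactness of $T_1$.

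The heart of the proof is to show that, for any bounded set $B \subset X$, the family $\{T_1(t)u_0 : t \geq t_0,\ u_0 \in B\}$ is relatively compact in $L^p(\Omega)$. By Lemma \ref{absorbing}, for $t \geq t_0(B)$ one has $\|u(s,\cdot)\|_{L^p(\Omega)} \leq R$ for all $s \in [t_0,t]$. I would then differentiate $T_1(t)u_0$ in $x$ under the integral sign, using that $J \in \mathscr{C}^1$:
\[
\partial_{x_i}[T_1(t)u_0](x) = \intop_0^t e^{-(t-s)} g'\!\left(\beta K(f(u))(x) + \beta h\right) \beta \intop_{\Omega} \frac{\partial J}{\partial x_i}(x,y)\, f(u(s,y))\, dy\, ds.
\]
Now \eqref{estimateLq} applied with the kernel $\partial_{x_i}J$ and exponent $q$ controls the inner integral by $\|J_x\|_{q}\|f(u(s,\cdot))\|_{L^p(\Omega)}$, and \eqref{dissipf} bounds $\|f(u)\|_{L^p(\Omega)}$ in terms of $\|u\|_{L^p(\Omega)} \leq R$. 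On the other hand, the estimate \eqref{estimateLq} also shows that $\beta K(f(u))(x) + \beta h$ lies in a fixed bounded interval of $\mathbb{R}$, so $g'$ evaluated at this argument is uniformly bounded thanks to \eqref{dissipglinha}. Multiplying these bounds and integrating the $e^{-(t-s)}$ factor yields a uniform estimate
\[
\|\nabla T_1(t)u_0\|_{L^p(\Omega)} \leq C(R,\beta,h,\|J\|_q,\|J_x\|_q),
\]
independent of $t \geq t_0$ and of $u_0 \in B$.

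Together with the uniform $L^p$ bound inherited from the absorbing property, this places $T_1(t)B$ in a bounded subset of $W^{1,p}(\Omega)$. Since $\Omega$ is a smooth bounded domain, Rellich--Kondrachov embeds $W^{1,p}(\Omega)$ compactly into $L^p(\Omega)$, giving uniform compactness. Temam's theorem then yields the global attractor $\mathcal{A} = \omega(\mathcal{B})$, which by invariance is contained in the closure of $\mathcal{B}$, hence in the ball of radius $R$.

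The main technical obstacle I expect is the careful justification of differentiation under the integral and the verification that the resulting derivative lies in $L^p(\Omega)$ with a uniform bound across the orbit — in particular, keeping track of the interaction between the nested operators $g\circ(\beta K \circ f)$ when one applies $\partial_{x_i}$, and ensuring the estimate is independent of $t$ once $t \geq t_0$. The case $p = \infty$ requires separate handling, since Rellich--Kondrachov is not available; there one must instead argue compactness via equicontinuity of the kernel $J$ together with Arzelà--Ascoli on $\overline{\Omega}$, using the same derivative bound to control the modulus of continuity of $T_1(t)u_0$.
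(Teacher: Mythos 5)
Your proposal is correct and follows essentially the same route as the paper: the same splitting of the variation-of-constants formula (you merely swap the labels $T_1$, $T_2$ so that they agree with the statement of Theorem \ref{TeoremaTeman}), the same bound on the spatial derivative of the integral term obtained by differentiating under the integral using $J\in\mathscr{C}^{1}$ together with \eqref{dissipglinha}, \eqref{dissipf} and Lemma \ref{boundK}, and the same compact Sobolev embedding to conclude via Temam's theorem. One small point to tighten: the integral defining your $T_1(t)u_0$ runs over all of $[0,t]$, so you also need a bound on $\|u(s,\cdot)\|_{L^{p}(\Omega)}$ for $s\in[0,t_0]$, which the paper obtains by replacing $R$ with $L=\max\left\{\rho,\;\frac{2\left(k_1\beta c_2+k_1\beta h+k_2\right)|\Omega|^{\frac{1}{p}}}{1-k_1\beta c_1}\right\}$, where $\rho$ is the radius of the bounded set $B$.
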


\begin{proof} If $u(\cdot,t)$ is the solution of (\ref{CP}) with initial
condition $u(\cdot,0)$. For $x\in \Omega$  we have, by the  variation of constants
formula,
\begin{equation}
u(x,t)=e^{-t}u(x,0)+\intop_{0}^{t}e^{s-t}g(\beta Kf(u)(x,s)+ \beta h)ds.
\label{FVC}
\end{equation}
Consider
$$ T_{1}(t)u(x)=e^{-t}u(x,0)$$
and
$$T_{2}(t)u(x)=\intop_{0}^{t}e^{s-t}g(\beta Kf(u)(x,s)+ \beta h)ds.$$
Then, assuming that $u(\cdot,0)\in {\cal C}$, where ${\cal C}$ is a bounded set in
$L^{p}(\Omega)$, (for example $B(0,\rho)$), it follows that
$$\|T_{1}(t)u\|_{L^{2}}\underset{t\to\infty}{\longrightarrow} 0
~ \mbox{uniformly in} ~u.$$
Also, using (\ref{FVC}), we have that $\|u(\cdot,t)\|_{L^{p}(\Omega)}\leq
L$, for $t\geq 0$, where
$L=\max\left\{\rho, \frac{2\left(k_1\beta c_2+k_1\beta h+k_2\right)
|\Omega|^{\frac{1}{p}}}{1-k_1\beta c_1}\right\}$. 
Therefore, for $t\geq 0$, we have that
\begin{eqnarray*}
\frac{\partial T_{2}(t)u(x)}{\partial x}&=&
\intop_{0}^{t}e^{s-t}\frac{\partial}{\partial
x}g(\beta Kf(u)(x,s)+\beta h)ds\\
&=&\beta\intop_{0}^{t} e^{s-t} g'(\beta Kf(u)(x,s)+
\beta h)\frac{\partial Kf(u)}{\partial x}(x,s)ds.
\end{eqnarray*}
Thus, using (\ref{dissipglinha}) and (\ref{estimateLp}), we obtain
\begin{eqnarray*}
\left\|\frac{\partial T_{2}(t)u}{\partial x}\right\|_{L^{p}(\Omega)}&\leq&
\intop_{0}^{t}e^{s-t}\|g'(\beta Kf(u)(\cdot,s)+
\beta h)\beta\frac{\partial Kf(u)}{\partial x}(\cdot,s)\|_{L^{p}(\Omega)}ds\\
&\leq&\intop_{0}^{t}e^{s-t}[k_3\beta\|J\|_{1}\|f(u(\cdot,s))\|_{L^{p}(\Omega)}\\ 
&+& k_3\beta h+k_4]\beta\|J_x\|_{1}\|f(u(\cdot,s))\|_{L^{p}(\Omega)}ds\\
&\leq&\intop_{0}^{t}e^{s-t}[k_3\beta (c_1\|u(\cdot,s)\|_{L^{p}(\Omega)}+c_2|\Omega|^{\frac{1}{p}})\\
&+&k_3\beta h+k_4]\beta\|J_x\|_{1} (c_1\|u(\cdot,s)\|_{L^{p}(\Omega)}+c_2|\Omega|^{\frac{1}{p}})ds\\
&\leq &[k_3\beta  (c_1\|u(\cdot,s)\|_{L^{p}(\Omega)}+c_2|\Omega|^{\frac{1}{p}})\\
&+&k_3\beta h+k_4]\beta \|J_x\|_{1} (c_1\|u(\cdot,s)\|_{L^{p}(\Omega)}+c_2|\Omega|^{\frac{1}{p}})\\
&\leq&[k_3\beta  (c_1L+c_2|\Omega|^{\frac{1}{p}})+k_3\beta h+k_4]\beta\|J_x\|_{1}(c_1L +c_2|\Omega|^{\frac{1}{p}}).
\end{eqnarray*}

It follows that, for $t>0$ and any $u\in {\cal C}$, the value of
$\left\|\frac{\partial T_{2}(t)u}{\partial x}\right\|_{L^{p}(\Omega)}$ is bounded by
a constant (independent of $t$ and $u$). Thus, for all $u\in {\cal C}$,
we have that $T_{2}(t)u$ belongs to a ball of $W^{1,2}(\Omega)$.
From Sobolev's Imbedding Theorem, it follows that
$$\bigcup_{t\geq 0}T_{2}(t){\cal C}$$
is  relatively compact. Therefore, the result follows from Theorem
\ref{TeoremaTeman}, the attractor $\cal A$ being  the set
$\omega$-limit of the ball $B(0,R)$. 
\end{proof}

\section{Comparison and boundedness results} \label{comparison}

In this section  we prove  a comparison result that generalizes the Theorem
2.7 of \cite{Orland} (where $g\equiv\tanh, ~f(x)=x,\forall ~x\in
\mathbb{R}$ and $h=0$) and Theorem 4.2 of \cite{DaSilvaPereira} 
(where $f(x)=x, ~\forall ~x\in \mathbb{R}$).

\begin{defn}
A function $v(x,t)$ is a subsolution of the Cauchy problem for
(\ref{CP}) with initial condition $u(\cdot,0)$ if $v(x,0)\leq
u(x,0)$ for almost all $x\in\Omega, ~v$ is continuously
differentiable with respect to $t$ and satisfies
\begin{equation}\label{8.33}
\frac{\partial v(x,t)}{\partial t}\leq
-v(x,t)+g(\beta Kf(v)(x,t)+\beta h), 
\end{equation}
almost everywhere (a.e.).
\end{defn}

Analogously, a function $V(x,t)$ is a super solution if has the
same regularity properties as above, satisfies (\ref{8.33}) with
reversed inequality and $V(x,0)\geq u(x,0)$ for almost all $x\in
\Omega$.

\begin{teo}
In addition to the hypotheses of Theorem \ref{attractor}, assume 
that the functions $g$ and $f$ are monotonic and Lipschitz continuous on bounded  with 
Lipschitz's constants $N$ and $M$, respectively. Let 
$v(w,t), ~[V(w,t)]$ be a subsolution [super solution] of the
Cauchy problem of (\ref{CP}) with initial condition $u(\cdot,
0)$. Then
$$v(x,t)\leq u(x,t)\leq V(x,t), ~\mbox{a.e.}.$$
\end{teo}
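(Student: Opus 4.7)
The plan is to prove the lower bound $v\le u$ by a Gronwall-type argument applied to the positive part $(v-u)^+$; the upper bound $u\le V$ follows by a symmetric argument. Set $w(x,t)=v(x,t)-u(x,t)$. Since $v$ is a subsolution and $u$ solves (\ref{CP}) exactly, subtracting the two inequalities yields
\begin{equation*}
\partial_t w \le -w + \bigl[g(\beta Kf(v)+\beta h) - g(\beta Kf(u)+\beta h)\bigr] \quad \text{a.e.,}
\end{equation*}
with initial data $w(\cdot,0)\le 0$. The goal is to show $w^+:=\max(w,0)\equiv 0$.

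The key point is to bound the bracket on the right-hand side by a nonnegative expression involving only $w^+$, exploiting the hypotheses of monotonicity and Lipschitz continuity on $g$ and $f$. Assume $f$ is nondecreasing with Lipschitz constant $M$: then $f(v(y))-f(u(y))$ has the same sign as $w(y)$, and a case split gives the pointwise inequality $f(v)-f(u)\le Mw^+$. Since the kernel $J$ is nonnegative,
\begin{equation*}
K(f(v)-f(u))(x) \le M \, K(w^+)(x), \qquad x\in\Omega.
\end{equation*}
Similarly, since $g$ is nondecreasing with Lipschitz constant $N$, a case split shows $g(a)-g(b)\le N(a-b)^+$ for all $a,b$. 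Applying this with $a=\beta Kf(v)+\beta h$ and $b=\beta Kf(u)+\beta h$ and using the previous inequality (together with $K(w^+)\ge 0$) yields
\begin{equation*}
g(\beta Kf(v)+\beta h) - g(\beta Kf(u)+\beta h) \le N\beta M \, K(w^+).
\end{equation*}
Combining, we obtain the pointwise inequality $\partial_t w \le -w + N\beta M K(w^+)$ a.e., and therefore
\begin{equation*}
\partial_t w^+ \le -w^+ + N\beta M \, K(w^+) \quad \text{a.e.}
\end{equation*}

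From here the conclusion follows by a standard energy/Gronwall argument. Multiplying by $p(w^+)^{p-1}$, integrating over $\Omega$, and invoking Hölder together with estimate (\ref{estimateL1}) from Lemma \ref{boundK}, we get
\begin{equation*}
\frac{d}{dt}\|w^+(\cdot,t)\|_{L^p(\Omega)}^{p} \le p(N\beta M - 1)\,\|w^+(\cdot,t)\|_{L^p(\Omega)}^{p}.
\end{equation*}
Since $w^+(\cdot,0)=0$, Gronwall's lemma forces $\|w^+(\cdot,t)\|_{L^p(\Omega)}=0$ for all $t\ge 0$, i.e., $v\le u$ a.e. Replacing $(v,u)$ by $(u,V)$ and repeating the argument proves $u\le V$ a.e. The case $p=\infty$ can be handled either by passing to the limit $p\to\infty$ in the $L^p$ estimate or by working directly with $\mathrm{ess\,sup}$ bounds.

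The main technical obstacle is the justification of differentiating $\|w^+(\cdot,t)\|_{L^p}^p$ and interchanging differentiation with integration, since $w$ is only assumed continuously differentiable in $t$ (not jointly smooth) and $(\cdot)^+$ is not smooth. This is handled in the usual way via the chain rule for Sobolev functions or, alternatively, by approximating $s\mapsto (s^+)^p$ by smooth convex approximants and passing to the limit; the monotonicity step that produces the pointwise bound $f(v)-f(u)\le Mw^+$ is what makes the closed differential inequality for $\|w^+\|_{L^p}^p$ work, and without it one only obtains estimates in terms of $\||w|\|_{L^p}$ which do not close.
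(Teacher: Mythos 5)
Your argument is correct in substance, but it is a genuinely different proof from the one in the paper. The paper works with the variation-of-constants operator $G(w)(x,t)=e^{-t}w(x,0)+\int_{0}^{t}e^{-(t-s)}g(\beta(Kf(w)(x,s)+h))\,ds$ on $L^{\infty}(\Omega\times[0,T])$: monotonicity of $f$ and $g$ (and nonnegativity of $J$) make $G$ order-preserving, the Lipschitz hypotheses make it a contraction when $\beta NMT<1$, a subsolution satisfies $v\leq G(v)$ and hence $v\leq\lim_{n}G^{n}(v)=z$, where $z$ is the genuine solution with initial datum $v(\cdot,0)\leq u(\cdot,0)$, so $z\leq u$ by monotone iteration from the ordered initial data; the result is then propagated by re-running the argument on $[T,2T]$, $[2T,3T]$, etc. Your route instead closes a differential inequality for $(v-u)^{+}$ and invokes Gronwall; the key pointwise bounds $f(v)-f(u)\leq Mw^{+}$ and $g(a)-g(b)\leq N(a-b)^{+}$ are exactly right, and the resulting estimate $\frac{d}{dt}\|w^{+}\|_{L^{p}}^{p}\leq p(N\beta M-1)\|w^{+}\|_{L^{p}}^{p}$ with zero initial data kills $w^{+}$ regardless of the sign of $N\beta M-1$, so you avoid both the smallness restriction $\beta NMT<1$ and the iteration over time intervals. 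The price is the extra regularity bookkeeping you already flag: the chain rule for $s\mapsto(s^{+})^{p}$ (fine for $p>1$, needs approximation for $p=1$), differentiation under the integral sign, and the fact that the mild solution is a strong one (true here since $F$ is locally Lipschitz and there is no unbounded operator). Two small points worth making explicit: (i) the hypothesis ``$f$ and $g$ monotonic'' must be read as compatibly monotone (both nondecreasing or both nonincreasing) so that $u\mapsto g(\beta Kf(u)+\beta h)$ is order-preserving — your case split is written only for the nondecreasing case, but the nonincreasing/nonincreasing case goes through verbatim with $b-a=\beta K(f(u)-f(v))\leq\beta MK(w^{+})$; the paper's proof tacitly makes the same assumption; (ii) your Hölder step needs $\int(w^{+})^{p-1}K(w^{+})\leq\|w^{+}\|_{L^{p}}^{p-1}\|K(w^{+})\|_{L^{p}}\leq\|w^{+}\|_{L^{p}}^{p}$ via estimate (\ref{estimateL1}), which is exactly what you cite.
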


\begin{proof}
Define the operator $G$ on $L^{\infty}(\Omega \times [0,T])$ by
$$G(w)(x,t)=e^{-t}w(x,0)+\intop_{0}^{t}e^{-(t-s)}g(\beta(Kf(w)(x,s)+h))ds.$$
Then  $(G(w))(x,0)=w(x,0)$. Also, since $f$ and $g$ are monotonic, 
it follows that $G$ is monotonic, that is, for any $w_{1},w_{2}\in
L^{\infty}(\Omega\times[0,T])$ with $w_{1} \geq w_{2}$ (a.e. in
$\Omega\times [0,T]),$ we have $G(w_{1})\geq G(w_{2})$ (a.e. in
$\Omega\times [0,T]$).

From (\ref{estimateLq}), we obtain
\begin{eqnarray*}
|G(w)(x,t)|&\leq& e^{-t}|w(x,0)|+\intop_{0}^{t}e^{-(t-s)}|g(\beta
Kf(w)(x,s)+\beta h)|ds\\
&\leq&e^{-t}|w(x,0)|+\intop_{0}^{t}e^{-(t-s)}[k_{1}|\beta
Kf(w)(x,s)+\beta h|+k_{2}]ds\\
&\leq&e^{-t}|w(x,0)|+\intop_{0}^{t}e^{-(t-s)}k_{1}\beta
|Kf(w)(x,s)|ds+\intop_{0}^{t}e^{-(t-s)}(k_{1}\beta h+k_{2})ds.
\end{eqnarray*}
Since  $|Kf(w)(x,s)|\leq\|J\|_{1}\|f(w)\|_{\infty}\leq
k_1\|w\|_{\infty}+k_2$ a.e. in $\Omega\times [0,T]$, we obtain
\begin{eqnarray*}
\|G(w)\|_{\infty}&\leq& e^{-t}\|w(\cdot,0)\|_{\infty}+\intop_{0}^{t}e^{-(t-s)}k_{1}\beta
(k_1\|w\|_{\infty}+k_2)ds+\intop_{0}^{t}e^{-(t-s)}(k_{1}\beta h+k_{2})ds\\
&\leq&\|w\|_{\infty}+k_{1}\beta (k_1\|w\|_{\infty}+k_2) +(k_{1}\beta h+k_{2}).
\end{eqnarray*}
Therefore $G:L^{\infty}(\Omega\times [0,T])\to
L^{\infty}(\Omega\times [0,T])$.

Furthermore, if  $\beta NMT< 1, ~G$ is a contraction in any
subset of functions of $L^{\infty}(\Omega\times [0,T])$ with the
same values at $t=0$. In fact
\begin{eqnarray*}
|G(w_{1})(x,t)-G(w_{2})(x,t)|&=&\left|\intop_{0}^{t}e^{-(t-s)}[g(\beta(Kf(w_{1})(x,s)+\beta
h)-g(\beta(Kf(w_{2})(x,s)+\beta h)]ds\right|\\
&\leq&
\intop_{0}^{t}e^{-(t-s)}N\beta|Kf(w_{1})(x,s)-Kf(w_{2})(x,s)|ds\\
&\leq &
\intop_{0}^{t}e^{-(t-s)}N\beta(K|f(w_1)-Kf(w_2)|(x,s))ds\\
& \leq &
\intop_{0}^{t}e^{-(t-s)}N\beta K\|f(w_{1})-f(w_{2})\|_{\infty}ds \\
&= & N\beta T
\|f(w_{1})-f(w_{2})\|_{\infty}\intop_{0}^{t}e^{-(t-s)}ds\\
&\leq& N\beta MT\|w_{1}-w_{2}\|_{\infty},
\end{eqnarray*}
a.e. in $\Omega \times [0,T]$. Hence
$\|G(w_{1})-G(w_{2})\|_{\infty}\leq\beta NMT
\|w_{1}-w_{2}\|_{\infty}$. Therefore, if $\beta NMT<1, ~G$ is
a contraction. Thus, if $u(x,t)$ is a solution of (\ref{CP}) with
$u^{0}=u(x,0)$, we have
$$u=\lim_{n\to \infty} G^{n}(u^{0})$$
on $L^{\infty}(\Omega\times[0,T])$. The same holds for a solution
$\widetilde{u}$ with $\widetilde{u}^{0}=\widetilde{u}(x,0)$. If
$\widetilde{u}^{0}\leq u^{0}$  a.e., with  $g$ and $f$ monotonic, it
follows that
$$G^{n}(\widetilde{u}^{0})\leq G^{n}(u^{0}), ~\mbox{a.e.}$$
Now, if $v$ is a subsolution of (\ref{CP}), 
it's easy to see that
$$v(x,t)\leq
e^{-t}v(x,0)+\intop_{0}^{t}e^{-(t-s)}g(\beta(Kf(v)(x,s)+h))ds, ~\mbox{a.e.}$$
Therefore $v(x,t)\leq G(v)(x,t)$, a.e., and
since  $g$ and $f$ are monotonic, it follows that $v(w,t)\leq
G^{n}(v)(x,t)$ a.e. 
Thus, $v(x,t)\leq z(x,t),~ \mbox{a.e.}$, where
\[z=\lim_{n\to \infty}G^{n+1}(v).\]
Now, from the continuity of $G$, it follows that
\[G(z)=G\left(\lim_{n \to \infty}
G^{n}(v)\right)=\lim_{n \to \infty} G^{n+1}(v)=z.\]
Therefore $z$ is a fixed point of $G$, that is, $z$  is a
solution of (\ref{CP}) in $\Omega\times[0,T]$ with initial
condition $z(\cdot,0)=v(\cdot,0)$. Thus, if $z(\cdot,0)\leq
u(\cdot,0), ~\mbox{a.e.}$, then
\[v\leq z \leq u, ~\mbox{a.e. in} ~\Omega\times [0,T],\]
where $u$ is the solution of (\ref{CP}) with initial condition
$u(\cdot, 0)$.
If  $V(x,t)$ is a super solution,  we obtain, by the same arguments
\[u\leq\widetilde{z}\leq V, ~\mbox{a.e. ~in} ~\Omega\times [0,T].\]
Therefore
\[v(x,t)\leq u(x,t)\leq V(x,t), ~\mbox{a.e.}\]
in $\Omega\times[0,T]$.

Since the estimates above do not depend on the initial condition,
we may extend the result to $[T, 2T]$ and, by iteration,
we can complete the proof of the theorem. 
\end{proof}

\begin{Remark} If we add the hypothesis $g(x)<\rho$,
the comparison result holds in the ball
$\mathbb{B}=\{L^{\infty}(\Omega\times [0,T]),
\|\cdot\|_{\infty}\leq\rho\}$.
\end{Remark}

In fact, it is enough to prove that
$G|_{\mathbb{B}}:\mathbb{B}\rightarrow \mathbb{B}$. But
\[|(G|_{\mathbb{B}}(w))(x,t)|\leq e^{-t}|w(x,0)|+\rho\intop_{0}^{t}e^{-(t-s)}ds.\]
Hence
\[\|(G|_{\mathbb{B}}(w))\|_{\infty}\leq e^{-t}\|w\|_{\infty}+\rho
\intop_{0}^{t}
e^{-(t-s)}ds\leq \rho e^{-t}+ \rho \intop_{0}^{t}e^{-(t-s)}ds=\rho.\]
Therefore, $G|_{\mathbb{B}}(w)\in \mathbb{B}$.

\begin{teo}\label{teolimit}
In the same conditions from Theorem \ref{attractor}, we have that
the attractor $\cal A$ belongs to the ball $\|\cdot\|_{\infty}\leq
\rho$ in $L^{\infty}(\Omega)$, where
$\rho=k_1\beta\|J\|_{q}c_1 R+k_1\beta\|J\|_{q}c_2
|\Omega|^{\frac{1}{p}}+k_1\beta h+k_2.$ 
\end{teo}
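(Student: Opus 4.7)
The plan is to exploit the invariance of the global attractor $\mathcal{A}$ under the flow $T(t)$, together with the variation of constants formula from Corollary \ref{globalexist} and the $L^p$-bound $R$ provided by Theorem \ref{attractor}.

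Fix $u \in \mathcal{A}$. Since $\mathcal{A}$ is invariant, for each $n \in \mathbb{N}$ one can pick $u_n \in \mathcal{A}$ with $T(n)u_n = u$. Writing the trajectory through $u_n$ by the variation of constants formula, for $x \in \Omega$ we obtain
\begin{equation*}
u(x) = e^{-n}u_n(x) + \int_0^n e^{s-n}\, g\bigl(\beta K f(T(s)u_n)(x) + \beta h\bigr)\, ds.
\end{equation*}
The integrand can be bounded pointwise independently of $s$ and $n$: since $T(s)u_n \in \mathcal{A}$ for every $s \geq 0$, Theorem \ref{attractor} gives $\|T(s)u_n\|_{L^p(\Omega)} \leq R$; combining the growth condition (\ref{dissipg}) for $g$, the kernel estimate (\ref{estimateLq}), and the growth condition (\ref{dissipf}) for $f$, I get
\begin{equation*}
\bigl|g(\beta K f(T(s)u_n)(x) + \beta h)\bigr|
\leq k_1\beta\|J\|_q(c_1R + c_2|\Omega|^{1/p}) + k_1\beta h + k_2 = \rho,
\end{equation*}
for almost every $x \in \Omega$. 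Consequently the integral term is bounded in absolute value by $\rho(1-e^{-n})$.

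To finish, I need to dispose of the transient term $e^{-n}u_n(x)$. One cannot control it in $L^\infty$ directly, but since $\|u_n\|_{L^p(\Omega)} \leq R$, one has $\|e^{-n}u_n\|_{L^p(\Omega)} \leq e^{-n}R \to 0$. Extract a subsequence $n_k$ along which $e^{-n_k}u_{n_k}(x) \to 0$ for almost every $x \in \Omega$. Since the identity displayed above holds for every $n$, taking $k \to \infty$ along this subsequence gives
\begin{equation*}
|u(x)| \leq \lim_{k\to\infty} |e^{-n_k}u_{n_k}(x)| + \rho\,(1-e^{-n_k}) \cdot 1 = \rho
\end{equation*}
for almost every $x \in \Omega$, hence $\|u\|_{L^\infty(\Omega)} \leq \rho$.

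The main conceptual obstacle is that the variation of constants formula at any finite time leaves the transient $e^{-t}u_0$, whose $L^\infty$-norm is not controlled by the $L^p$-norm of $u_0$. The device of using invariance to push the ``initial condition'' arbitrarily far into the past, combined with the fact that the nonlinearity $g(\beta Kf(\cdot) + \beta h)$ is \emph{automatically} bounded in $L^\infty$ by $\rho$ once the argument lies in $\mathcal{A}$, is what allows the transient to be thrown away along a subsequence.
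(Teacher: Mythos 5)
Your proof is correct and follows essentially the same route as the paper: both arguments use the invariance of $\mathcal{A}$ to push the initial time arbitrarily far into the past, bound the nonlinear term pointwise by $\rho$ via (\ref{dissipg}), (\ref{estimateLq}), (\ref{dissipf}) and the $L^p$-bound $R$ on the attractor, and discard the transient. If anything, your passage to an a.e.\ convergent subsequence makes explicit a step the paper glosses over when it lets $t_0\to-\infty$ ``in the sense of $L^p(\Omega)$''.
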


\begin{proof}  From Theorem \ref{attractor} the attractor is
contained in the ball $B[0,\rho]$ in $L^{p}(\Omega)$.

Let $u(x,t)$ be a solution of (\ref{CP}) in ${\cal A}$. Then, for $x\in\Omega$, by
the variation of constants formula
$$
u(x,t)=e^{-(t-t_{0})}u(x,t_{0})+\intop_{t_{0}}^{t}e^{-(t-s)}g(\beta Kf(u)(x,s)+\beta
h)ds.
$$
Since $\|u(\cdot,t)\|_{L^{p}(\Omega)}\leq R$ for all $u\in {\cal A}$,
we obtain
 for all $(x,t) \in \Omega \times \mathbb{R}^{+}$
letting $t_{0}\to -\infty$
$$
u(x,t)=\intop_{-\infty}^{t}e^{-(t-s)}g(\beta Kf(u)(x,s)+\beta h)ds,
$$
where the equality above is in the sense of $L^{p}(\Omega)$. Thus,
using (\ref{dissipg}), we have
\begin{eqnarray*}
|u(x,t)|&\leq&
\intop_{-\infty}^{t}e^{-(t-s)}|g(\beta Kf(u)(x,s)+\beta
h)|ds\\
&\leq&\intop_{-\infty}^{t}e^{-(t-s)}[k_1\beta |Kf(u(x,t))+\beta h|+ k_2]ds\\
&\leq&\intop_{-\infty}^{t}e^{-(t-s)}[k_1\beta\|J\|_{q}\|f(u(\cdot,t))\|_{L^{p}(\Omega)}+k_1\beta h+k_2]ds\\
&\leq&\intop_{-\infty}^{t}e^{-(t-s)}[k_1\beta\|J\|_{q}(c_1\|u(\cdot,t)\|_{L^{p}(\Omega)}+c_2|\Omega|^{\frac{1}{p}})+k_1\beta h+ k_2]ds\\
&\leq&\intop_{-\infty}^{t}e^{-(t-s)}[k_1\beta\|J\|_{q}(c_1 R+c_2|\Omega|^{\frac{1}{p}})+k_1\beta h+k_2]ds\\
&\leq&\intop_{-\infty}^{t}\rho e^{-(t-s)} ds.
\end{eqnarray*}
Therefore $\|u(\cdot,t)\|_{\infty}\leq\rho$, as claimed
\end{proof}

\section{Existence of a Lyapunov's functional}\label{lyap}

In this section we exhibit a continuous ``Lyapunov's functional''
for the flow of (\ref{CP}), restricted to the ball of radius $\rho$
in $L^{\infty}(\Omega)$, concluding that this flow is gradient,
in the sense of \cite{Hale}. 

Initially, we claim that $\{L^{p}(\Omega),\|\cdot\|_{\infty}\leq\rho\}$ is an
invariant set for the flow generated by (\ref{CP}).

In fact, let 
\[
u(x,t)=e^{-t}u(x,0)+\intop_{0}^{t}e^{-(t-s)}g(\beta Kf(u(x,s))+\beta h)ds
\]
be the solution of (\ref{CP}) with initial condition $u(\cdot,0)\in\{L^{2}(\Omega),\|\cdot\|_{\infty}\leq\rho\}.$ Then
\begin{eqnarray*}
|u(x,t)|&\leq&e^{-t}|u(x,0)|+\intop_{0}^{t}e^{-(t-s)}|g(\beta Kf(u(x,s))+\beta h)|ds\\
&\leq&e^{-t}|u(x,0)|+\intop_{0}^{t}e^{-(t-s)}[k_1\beta |Kf(u(x,t))+\beta h|+k_2]ds\\
&\leq&e^{-t}|u(x,0)|+\intop_{0}^{t}e^{-(t-s)}[k_1\beta\|J\|_{q}\|f(u(\cdot,t))\|_{L^{p}(\Omega)}+k_1\beta h+k_2]ds\\
&\leq&e^{-t}|u(x,0)|+\intop_{0}^{t}e^{-(t-s)}[k_1\beta\|J\|_{q}(c_1\|u(\cdot,t)\|_{L^{p}(\Omega)}+c_2|\Omega|^{\frac{1}{p}})+k_1\beta h+k_2]ds\\
&\leq&e^{-t}|u(x,0)|+\intop_{0}^{t}e^{-(t-s)}\rho ds.
\end{eqnarray*}
Whence,
\begin{eqnarray*}
\|u(\cdot,t)\|_{\infty}&\leq&e^{-t}\|u(\cdot,0)\|_{\infty}+\rho \intop_{0}^{t}e^{-(t-s)}ds\\
&\leq&e^{-t}\rho+\rho\intop_{0}^{t}e^{-(t-s)}ds\\
&=&\rho.
\end{eqnarray*}

For to exhibit a continuous ``Lyapunov's functional''
for the flow of (\ref{CP}),  we assume that the 
functions $f$ and $g$ satisfy the following conditions:
\begin{equation}\label{limitationg} 
0<|g(x)|<\rho, ~ \forall ~ x\in\mathbb{R},
\end{equation}
the function $g^{-1}$ is continuous in $]-\rho,\rho[$ and the
function 
\begin{equation}\label{energy}
\theta(m)=-\frac{1}{2}f(m)^{2}-hf(m)-\beta^{-1}i(m), ~ m\in [-\rho,\rho],
\end{equation}
where $i$ is defined by
$$i(m)=-\intop_{0}^{f(m)}g^{-1}(f^{-1}(s))ds, ~ m\in [-\rho,\rho],$$
has a global minimum $\bar{m}$ in $]-\rho,\rho[$.

Note that if (\ref{limitationg}) holds, it follows that
(\ref{dissipg}) holds with $k_{1}=0$ and
$k_{2}=\rho$. 

Motivated by functionals that appear in
\cite{DaSilvaPereira,Silva4,DaSilvaPereira2, Kubota} and \cite{Orland}, 
we define the functional ${\cal F}:\{L^{p}(\Omega),~\|u
\|_{\infty}\leq\rho\}\to \mathbb{R}$ by
\begin{equation}\label{4.8}
{\cal F}(u)=\intop_{\Omega}[\theta(u(x))-\theta(\overline{m})]dx+
\frac{1}{4}\intop_{\Omega}\intop_{\Omega}J(x,y)[f(u(x))-f(u(y))]^{2}dxdy, 
\end{equation}
where $\theta$ is given in (\ref{energy}), which has been adapted from functions considered in \cite{Orland} 
and \cite{DaSilvaPereira}.

Note that the functional in (\ref{4.8}) is defined
in the whole space $\{L^{p}(\Omega), \,\, \|u\|_{\infty}\leq\rho\}$. 
Furthermore, using the hypotheses on $f$ and $g$ and Lebesgue's 
Dominated Convergence Theorem, we obtain the following result:

\begin{teo}\label{Teorema 4.1}
In addition to the hypotheses of Theorem \ref{attractor}, assume 
that the hypotheses established in (\ref{limitationg}) and
(\ref{energy}) hold. Then the functional given in
(\ref{4.8}) is continuous in the topology of
$L^{p}(\Omega).$
\end{teo}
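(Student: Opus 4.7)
The plan is to apply Lebesgue's Dominated Convergence Theorem to each of the two pieces of $\mathcal{F}$ and to pass from $L^p$--convergence to pointwise convergence via the standard subsequence trick. Fix a sequence $u_n$ in the ball $\{u\in L^p(\Omega):\|u\|_\infty\le\rho\}$ with $u_n\to u$ in $L^p(\Omega)$; note $\|u\|_\infty\le\rho$ as well. By the Riesz--Fischer theorem, any subsequence of $u_n$ has a further subsequence, which I will still denote by $u_{n_k}$, that converges to $u$ almost everywhere in $\Omega$.

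For the first summand of $\mathcal{F}$, I would first verify that $\theta$ is continuous on $[-\rho,\rho]$. Indeed, $f$ is continuous by hypothesis so $m\mapsto -\tfrac12 f(m)^2-hf(m)$ is continuous, and from the continuity of $g^{-1}$ on $]-\rho,\rho[$ (together with continuity of $f^{-1}$ implicit in (\ref{energy})) the map $s\mapsto g^{-1}(f^{-1}(s))$ is continuous on the image $f([-\rho,\rho])$, so $i(m)=-\int_0^{f(m)} g^{-1}(f^{-1}(s))\,ds$ is continuous in $m$. Continuity of $\theta$ on the compact interval $[-\rho,\rho]$ gives a uniform bound $|\theta(u_{n_k}(x))-\theta(\overline m)|\le C_1$ for all $x$ and all $k$, and since $|\Omega|<\infty$ the constant $C_1$ is an integrable dominating function. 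Combined with $\theta(u_{n_k}(x))\to\theta(u(x))$ a.e., Dominated Convergence yields
\[
\int_\Omega[\theta(u_{n_k}(x))-\theta(\overline m)]\,dx\;\longrightarrow\;\int_\Omega[\theta(u(x))-\theta(\overline m)]\,dx.
\]

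For the double integral, the integrand is $J(x,y)[f(u_{n_k}(x))-f(u_{n_k}(y))]^2$. Since $J$ is of class $\mathscr{C}^1$ on $\mathbb{R}^N\times\mathbb{R}^N$ it is bounded on $\overline\Omega\times\overline\Omega$, say by $J_\infty$, and since $f$ is continuous on the compact set $[-\rho,\rho]$ it is bounded there by some $F$. Hence the integrand is dominated by the constant $4F^2 J_\infty$, which is integrable on the product $\Omega\times\Omega$. Pointwise a.e.\ convergence $f(u_{n_k}(x))\to f(u(x))$ follows from the continuity of $f$ and the subsequence choice, so a second application of Dominated Convergence gives convergence of the double integral to the corresponding expression at $u$.

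Combining both limits shows $\mathcal{F}(u_{n_k})\to\mathcal{F}(u)$. Since every subsequence of the original $u_n$ admits a further subsequence with this property, the full sequence satisfies $\mathcal{F}(u_n)\to\mathcal{F}(u)$, proving continuity in the $L^p$ topology. The only genuinely nontrivial step is the continuity/boundedness of $\theta$ on $[-\rho,\rho]$; once that is in hand the two dominated convergence arguments are routine, and the subsequence trick handles the passage from $L^p$ to pointwise convergence.
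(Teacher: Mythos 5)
Your proof is correct and follows exactly the route the paper indicates (the paper only remarks that the result follows ``using the hypotheses on $f$ and $g$ and Lebesgue's Dominated Convergence Theorem'' and gives no further details): boundedness and continuity of $\theta$, $f$ and $J$ on the relevant compact sets supply constant dominating functions on the finite-measure domains $\Omega$ and $\Omega\times\Omega$, and the a.e.-convergent-subsequence argument upgrades $L^p$ convergence to the pointwise convergence needed for DCT. Your explicit flagging of the continuity of $g^{-1}\circ f^{-1}$ as the only point requiring the hypotheses in (\ref{limitationg})--(\ref{energy}) is a useful clarification of what the paper leaves implicit.
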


Now we are ready to prove the main result of this section.

\begin{teo}\label{Teorema 4.2}
In addition of the hypotheses from Theorem \ref{attractor}, 
assume that the hypotheses established in (\ref{limitationg}) 
and (\ref{energy}) hold and that $f$ has positive derivative. 
Let $u(\cdot , t)$ be a solution of (\ref{CP})
with $\|u(\cdot,t)\|_{\infty}\leq \rho$. Then ${\cal F}(u(\cdot,t))$ is
differentiable with respect to $t$ for $t>0$ and
\[\frac{d}{dt}{\cal F}(u(\cdot,t))=-{\cal I}(u(\cdot , t))\leq 0,\]
where, for any $u\in L^{p}(\Omega)$ with $\|u\|_{\infty}\leq \rho$,
$${\cal I}(u(\cdot))=\intop_{\Omega}[K(f(u)(x))+h-\beta^{-1}g^{-1}(u(x))]
[g(\beta K(f(u)(x))+\beta h)-u(x)]f'(u(x))dx.$$
Furthermore, the integrand in ${\cal I}(u(\cdot))$ is a non negative
function and, $u$ is a critical point of ${\cal F}$ if only if
$u$ is an equilibrium of (\ref{CP}).
\end{teo}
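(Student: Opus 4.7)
The plan is to differentiate $\mathcal{F}(u(\cdot,t))$ directly along the trajectory. Since $u(\cdot,t)$ lies in the $L^{\infty}$-ball of radius $\rho$ (the invariant set established earlier in this section) and $t\mapsto u(\cdot,t)$ is continuously differentiable by the results of Section \ref{Smoothness}, I would justify passing $d/dt$ under the integrals by dominated convergence, using uniform bounds on $f,f',g,g^{-1}$ on $[-\rho,\rho]$. The first task is to compute $\theta'$. From $i(m)=-\int_0^{f(m)}g^{-1}(f^{-1}(s))\,ds$, the fundamental theorem of calculus combined with the chain rule (valid since $f'>0$ makes $f$ invertible with $f^{-1}\circ f=\mathrm{id}$) gives $i'(m)=-g^{-1}(m)f'(m)$, so
\[
\theta'(m)=f'(m)\bigl[-f(m)-h+\beta^{-1}g^{-1}(m)\bigr].
\]

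Next, I would differentiate the double-integral term. Writing the time derivative of the squared difference and exploiting the symmetry $J(x,y)=J(y,x)$ via the substitution $x\leftrightarrow y$ to merge the two resulting pieces,
\[
\frac{d}{dt}\,\frac{1}{4}\iint J(x,y)\bigl[f(u(x))-f(u(y))\bigr]^{2}dx\,dy
=\iint J(x,y)\bigl[f(u(x))-f(u(y))\bigr]f'(u(x))u_t(x)\,dy\,dx.
\]
Since $u\equiv 0$ on $\mathbb{R}^{N}\setminus\Omega$, the inner $y$-integral extends to $\mathbb{R}^{N}$ at no cost; then $\int_{\mathbb{R}^N}J(x,y)\,dy=1$ and the definition of $K$ collapse it to $f(u(x))-K(f\circ u)(x)$. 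Adding the $\theta'$-term and substituting the equation $u_t=g(\beta K(f\circ u)+\beta h)-u$ produces the clean cancellation $-f(u)+f(u)=0$, leaving
\[
\frac{d}{dt}\mathcal{F}(u(\cdot,t))=-\int_{\Omega}\!\bigl[K(f\circ u)(x)+h-\beta^{-1}g^{-1}(u(x))\bigr]\bigl[g(\beta K(f\circ u)(x)+\beta h)-u(x)\bigr]f'(u(x))\,dx=-\mathcal{I}(u(\cdot,t)).
\]

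For the sign, I would set $A(x)=\beta K(f\circ u)(x)+\beta h$ and $B(x)=g^{-1}(u(x))$, which is well-defined on $]-\rho,\rho[$. Then $u=g(B)$ and the integrand becomes $\beta^{-1}f'(u)\,[A-B]\,[g(A)-g(B)]$. The continuity of $g^{-1}$ on $]-\rho,\rho[$ forces $g$ to be strictly monotone; choosing the increasing branch (consistent with $\theta$ admitting a global minimum and with the Lyapunov inequality $\le 0$) makes $[A-B][g(A)-g(B)]\ge 0$, and $f'>0$ gives pointwise non-negativity. The equilibrium characterization is then immediate: $\mathcal{I}(u)=0$ forces $A(x)=B(x)$ a.e.\ (using strict monotonicity of $g$ and $f'>0$), i.e.\ $u(x)=g(\beta K(f\circ u)(x)+\beta h)$, which is exactly the stationary equation for \eqref{CP}; conversely every equilibrium trivially kills the integrand.

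The main technical point I expect to require care is the collapse of the double integral. One must legitimately replace $\int_\Omega J(x,y)\,dy$ by $\int_{\mathbb{R}^N}J(x,y)\,dy=1$, which relies on extending $f\circ u$ by $f(0)=0$ off $\Omega$ so that the contributions from $\mathbb{R}^N\setminus\Omega$ to both $\iint J(x,y)f(u(y))$ and $\iint J(x,y)f(u(x))$ vanish consistently. A secondary concern is verifying the critical-point equivalence rigorously, since $\mathcal{F}$ is only asserted to be continuous by Theorem \ref{Teorema 4.1}; however, the same computation that yields $d\mathcal{F}/dt$ along orbits also furnishes the Gâteaux derivative in direction $v$ (differentiating the algebraic expression rather than along the flow), and the same factorization shows the derivative vanishes identically in $v$ precisely at equilibria.
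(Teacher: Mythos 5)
Your computation of $\theta'$, the symmetrization of the double integral, the cancellation after substituting $u_t=-u+g(\beta K(f\circ u)+\beta h)$, the monotonicity argument for the sign of the integrand, and the equilibrium characterization all coincide with the paper's proof. There is, however, one genuine gap in your justification for differentiating under the integral sign: you invoke ``uniform bounds on $f,f',g,g^{-1}$ on $[-\rho,\rho]$'', but the hypotheses only guarantee that $g^{-1}$ is continuous on the \emph{open} interval $]-\rho,\rho[$, and in the motivating examples (e.g.\ $g=\tanh$ with $\rho=1$) $g^{-1}$ blows up at the endpoints. Since the theorem only assumes $\|u(\cdot,t)\|_{\infty}\le\rho$, the term $\beta^{-1}g^{-1}(u(x,s))$ appearing in $\partial\phi/\partial s$ need not be bounded, and your dominated-convergence argument fails as stated.

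The paper closes exactly this hole in two steps: it first establishes the formula under the extra assumption $\|u(\cdot,s)\|_{\infty}\le\rho-\varepsilon$ on a compact time interval $\Delta$, and then proves that this strict interior bound actually holds for every $t>0$ by comparing $u$ with the spatially homogeneous solution $\lambda(t)$ of the associated scalar ODE with $\lambda(0)=\rho$; since $|g|<\rho$ one gets $\lambda(t)<\rho$ for $t>0$, whence $\|u(\cdot,t)\|_{\infty}\le\lambda(t)<\rho$ by the comparison theorem of Section \ref{comparison}. You need this step (or some other argument keeping $u(\cdot,t)$ uniformly inside $]-\rho,\rho[$ on compact subintervals of $]0,\infty[$) before the differentiation is legitimate. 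The secondary issue you flag --- replacing $\intop_{\Omega}J(x,y)\,dy$ by $1$ when collapsing the double integral --- is indeed delicate, but the paper makes exactly the same move, so it is a shared convention rather than a defect of your argument; and your Gateaux-derivative reading of ``critical point'' is a reasonable (slightly more careful) version of what the paper does implicitly.
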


\begin{proof} From hypotheses on $g$ and $f$, it follows that ${\cal F}(u(\cdot,t))$
is well defined for all $t\geq 0$. We assume first that, given
$t>0$, there exists $\varepsilon
>0$ such that $\|u(\cdot,s)\|_{\infty}\leq \rho-\varepsilon,$ for $s\in
\Delta$ where $\Delta$ is a closed finite interval containing
$t$. For $s\in\Delta$ we write
$${\cal F}(u(\cdot,s))=\intop_{\Omega}\phi(x,s)dx ~\mbox{and}
~ {\cal I}(u(\cdot,s))=\intop_{\Omega}\iota(x,s)dx.$$
As
\begin{eqnarray*}
\frac{\partial\phi}{\partial
s}(x,s)&=&[-f(u(x,s))-h+\beta^{-1}g^{-1}(u(x,s))]f'(u(x,s))\frac{\partial}{\partial s}u(x,s)\\
&&+\frac{1}{2}\intop_{\Omega}J(x,y)[f(u(x,s))-f(u(y,s))]\left[f'(u(x,s))\frac{\partial u(x,s)}{\partial
s}-f'(u(y,s))\frac{\partial u(y,s)}{\partial s}\right]dy,
\end{eqnarray*}
the hypotheses on $g$, $f$ and $f'$ imply that
$\frac{\partial \phi(x,s)}{\partial s}$ is almost
everywhere continuous and bounded in $x$ for $s\in\Delta$. Thus
$$\sup_{s\in \Delta}\left\|\frac{\partial \phi(\cdot,s)}{\partial
s}\right\|_{L^{1}}<\infty.$$
Therefore, we can derive under the integration sign obtaining
\begin{eqnarray*}
&&\frac{d}{ds}{\cal F}(u(\cdot,s))=\intop_{\Omega}[-f(u(x,s))-h+\beta^{-1}g^{-1}(u(x,s))]f'(u(x,s))\frac{\partial
u(x,s)}{\partial s}dx\\
&&+\frac{1}{2}\intop_{\Omega}\intop_{\Omega}J(x,y)[f(u(x,s))-f(u(y,s))]\Bigg[f'(u(x,s))\frac{\partial u(x,s)}{\partial s}-f'(u(y,s))\frac{\partial u(y,s)}{\partial s}\Bigg]dxdy.
\end{eqnarray*}
But
\begin{eqnarray*}
&&\intop_{\Omega}\intop_{\Omega}J(x,y)[f(u(x,s))-f(u(y,s))]
\left[f'(u(x,s))\frac{\partial u(x,s)}{\partial s}- f'(u(y,s))\frac{\partial
u(y,s)}{\partial s}\right]dxdy\\
&&=\intop_{\Omega}\intop_{\Omega}J(x,y)f(u(x,s)) f'(u(x,s))\frac{\partial
u(x,s)}{\partial s}dxdy \\
&&- \intop_{\Omega}\intop_{\Omega}J(x,y) f(u(x,s)) f'(u(y,s)) \frac{\partial u(y,s)}{\partial s}dxdy\\
&&-\intop_{\Omega}\intop_{\Omega}J(x,y)f(u(y,s)) f'(u(x,s))\frac{\partial
u(x,s)}{\partial s}dxdy\\
&&+ \intop_{\Omega}\intop_{\Omega}J(x,y)f(u(y,s)) f'(u(y,s)) \frac{\partial u(y,s)}{\partial s}dxdy\\
&&=2\intop_{\Omega}\intop_{\Omega}J(x,y) f(u(x,s)) f'(u(x,s)) \frac{\partial
u(x,s)}{\partial s}dxdy \\
&&- 2\intop_{\Omega}\intop_{\Omega}J(x,y) f(u(y,s)) f'(u(x,s))\frac{\partial u(x,s)}{\partial s}dxdy\\
&&=2\intop_{\Omega}\left(\intop_{\Omega}J(x,y)dy \right) f(u(x,s)) f'(u(x,s)) \frac{\partial
u(x,s)}{\partial s}dx\\
&&-2\intop_{\Omega} \left(\intop_{\Omega}J(x,y) f(u(y,s)) dy \right) f'(u(x,s))\frac{\partial u(x,s)}{\partial s}dx.
\end{eqnarray*}
Using the fact that 
$$\intop_{\Omega}J(x,y)dy = \intop_{\Omega}J(x,y)dx =1,$$
it follows that
\begin{eqnarray*}
\frac{d}{ds}{\cal F}(u(\cdot ,
s))&=&\intop_{\Omega}\bigg[-f(u(x,s))-h+\beta^{-1}g^{-1}(u(x,s))\bigg] f'(u(x,s))\frac{\partial
u(x,s)}{\partial s}dx\\
&&+\intop_{\Omega}[f(u(x,s))-Kf(u(x,s))]f'(u(x,s))\frac{\partial
u(x,s)}{\partial s} dx\\
&&=\intop_{\Omega}\bigg[-f(u(x,s))-h+\beta^{-1}g^{-1}(u(x,s))+f(u(x,s))\\
&& -~ Kf(u(x,s))\bigg] f'(u(x,s))\frac{\partial
u(x,s)}{\partial s}dx\\
&&=-\intop_{\Omega}\bigg[Kf(u(x,s))+h-\beta^{-1}g^{-1}(u(x,s))\bigg]f'(u(x,s))\frac{\partial
u(x,s)}{\partial s} dx\\
&&=-\intop_{\Omega}\bigg[Kf(u(x,s))+h-\beta^{-1}g^{-1}(u(x,s))\bigg]\big[-u(x,s)\\
&&+~g(\beta Kf(u(x,s))+\beta h)\big] f'(u(x,s)) dx\\
&&=-{\cal I}(u(\cdot,s)).
\end{eqnarray*}

This proves the first part of theorem with the additional hypothesis that
$\|u(\cdot,s)\|_{\infty}\leq\rho-\varepsilon$, for $s\in\Delta$ and
some $\varepsilon>0$, where $\Delta$ is a closed
finite interval containing $t$. 

We claim that this hypothesis actually
holds for all $t>0$. In fact, let $\lambda(x,t)$ be the solution of (\ref{CP}) such that
$\lambda(x,0)=\rho$ for any $x\in\Omega$. Then
$\lambda(x,t)=\lambda(t)$, 
where
$$\frac{d \lambda}{dt}=-\lambda(t)+g(\beta(\lambda(t)+h)).$$
Since $|g(x)|<\rho, ~\forall ~x\in\mathbb{R}$, it follows easily that 
$\lambda(t)<\rho$ for any $t>0$. As $u(x,0)\leq\rho$, we obtain 
by the Comparison Theorem
$$u(x,t)\leq\lambda(t)<\rho,$$
for almost every $x\in\Omega$ and $t>0$. Repeating the same
argument, starting from inequality $u(x,0)\geq -\rho,$ for almost
every $x\in\Omega$, we obtain $u(x,t)\geq -\lambda(t)>-\rho$, and
thus
$$\|u(\cdot,t)\|_{\infty}\leq\lambda(t)<\rho, ~\forall ~ t>0$$
and the claim follows by continuity.

To conclude the proof, it is enough to show that $u$ is a critical point
of ${\cal F}$ if and only if $u$ is an equilibrium of (\ref{CP}).
For this, let $u(x)$ be a critical point of the functional
${\cal F}$, then ${\cal I}(u(\cdot))=0$. Since the integrand is non
negative almost everywhere, it  follows that
$$[(Kf(u)(x))+h-\beta^{-1}g^{-1}(u(x))]f'(u(x))[g(\beta(Kf(u)(x)+h))-u(x)]=0$$
almost everywhere. Since $f'(u(x))>0$, for all $x\in \mathbb{R}$, we
have that 
$$[(Kf(u)(x))+h-\beta^{-1}g^{-1}(u(x))][g(\beta(Kf(u)(x)+h))-u(x)]=0$$
almost everywhere. But the annihilation of any of these factors
implies that $$g(\beta Kf(u)(x)+\beta h)=u(x).$$
Reciprocally, if $u$ is a equilibrium of (\ref{CP}), it is easy
to see that ${\cal I}(u(\cdot))=0$. 
\end{proof}

As a immediate consequence of the existence of the functional
${\cal F}$, we obtain the following result.

\begin{coro}\label{Coro 4.4}
Under the same hypotheses of Theorem \ref{Teorema 4.2}, there are no non trivial recurrent points under the flow of (\ref{CP}). 
\end{coro}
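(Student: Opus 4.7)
The corollary is a standard application of the strict Lyapunov functional provided by Theorem \ref{Teorema 4.2}: I would show that any recurrent point must be an equilibrium. Recall $u_0$ is recurrent when there is a sequence $t_n \to +\infty$ with $T(t_n)u_0 \to u_0$ in $L^p(\Omega)$; equilibria are the trivial recurrent points, and the claim is that these are the only ones.

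Suppose for contradiction that $u_0$ is recurrent and not an equilibrium. Working in the ball $\{\|u\|_\infty \leq \rho\}$, which we showed at the start of Section \ref{lyap} to be invariant under the flow, I would use Theorem \ref{Teorema 4.2} to note that $t \mapsto \mathcal{F}(T(t)u_0)$ is non-increasing with derivative $-\mathcal{I}(T(t)u_0) \leq 0$. Since $\mathcal{F}$ is bounded below on this invariant ball, the limit $\ell = \lim_{t\to\infty} \mathcal{F}(T(t)u_0)$ exists. The continuity of $\mathcal{F}$ on $\{\|u\|_\infty\leq\rho\}$ established in Theorem \ref{Teorema 4.1}, applied to the recurrence $T(t_n)u_0 \to u_0$, gives $\ell = \mathcal{F}(u_0) = \mathcal{F}(T(0)u_0)$. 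Combined with monotonicity this forces $\mathcal{F}(T(t)u_0)$ to be constant for all $t \geq 0$.

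Consequently $\frac{d}{dt}\mathcal{F}(T(t)u_0) = -\mathcal{I}(T(t)u_0) = 0$ for every $t>0$. Because the integrand defining $\mathcal{I}$ is a.e. non-negative with $f' > 0$, vanishing of $\mathcal{I}$ forces
\[
\bigl[Kf(u)(x) + h - \beta^{-1} g^{-1}(u(x))\bigr]\bigl[g(\beta Kf(u)(x) + \beta h) - u(x)\bigr] = 0
\]
almost everywhere along the orbit. As noted at the end of the proof of Theorem \ref{Teorema 4.2}, the vanishing of either factor implies $g(\beta Kf(u)(x)+\beta h) = u(x)$, so $T(t)u_0$ is an equilibrium for every $t > 0$. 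By continuity of the flow at $t = 0$, $u_0$ itself is an equilibrium, contradicting the assumption.

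The only delicate point is the passage from $\mathcal{F}(T(t_n)u_0) \to \mathcal{F}(u_0)$ together with monotonicity to \emph{constancy} of $\mathcal{F}(T(\cdot)u_0)$ on $[0,\infty)$; everything else is a direct invocation of the continuous Lyapunov identity of Theorem \ref{Teorema 4.2} and the invariance of the $\rho$-ball. No new estimate is needed.
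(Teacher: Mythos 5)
Your argument is correct and is precisely the standard LaSalle-type reasoning that the paper leaves implicit (the paper offers no proof, simply declaring the corollary an ``immediate consequence'' of the existence of $\mathcal{F}$): monotonicity of $\mathcal{F}$ along orbits plus continuity of $\mathcal{F}$ and the recurrence $T(t_n)u_0\to u_0$ force $\mathcal{F}(T(\cdot)u_0)$ to be constant, hence $\mathcal{I}\equiv 0$ along the orbit, and the sign structure of the integrand together with $f'>0$ identifies the orbit as an equilibrium. The one step you flag as delicate is handled correctly: $\mathcal{F}(u_0)\geq \mathcal{F}(T(t)u_0)\geq \lim_{s\to\infty}\mathcal{F}(T(s)u_0)=\mathcal{F}(u_0)$ gives constancy, so no gap remains.
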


\begin{Remark}\label{Remark 4} The integrand in the functional ${\cal F}$ above
is always  non negative since $J$ is positive and
 $\overline{m}$ is a  global minim of $\theta$. 
Thus, ${\cal F}$ is lower bounded.
\end{Remark}

We recall that a $C^{r}$-semigroup, $T(t)$, is gradient if
each bounded positive orbit is precompact and there exists a
Lyapunov's Functional for $T(t)$ (see \cite{Hale}).

\begin{prop}\label{gradient} Assume the same hypotheses of Theorem \ref{Teorema 4.2}. 
Then the flow generated by equation (\ref{CP}) is gradient.
\end{prop}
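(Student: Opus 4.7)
The plan is to verify the two defining conditions of a gradient $C^{r}$-semigroup in the sense of \cite{Hale}: (i) every bounded positive orbit is precompact, and (ii) there exists a continuous Lyapunov functional for $T(t)$. Both ingredients have essentially already been established in the preceding sections, so the argument will amount to assembling them.

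For (ii) I would take $\mathcal{F}$ from (\ref{4.8}) as the candidate Lyapunov functional. Its continuity in the $L^{p}(\Omega)$ topology is exactly Theorem \ref{Teorema 4.1}, and the inequality $\frac{d}{dt}\mathcal{F}(u(\cdot,t)) = -\mathcal{I}(u(\cdot,t)) \leq 0$ coming from Theorem \ref{Teorema 4.2} gives monotone non-increase along orbits. The remaining condition, that $\mathcal{F}(T(t)u_{0})$ being constant in $t$ forces $u_{0}$ to be an equilibrium, follows again from Theorem \ref{Teorema 4.2}: constancy of $\mathcal{F}$ implies $\mathcal{I}(u(\cdot,t)) \equiv 0$, and since the integrand defining $\mathcal{I}$ is pointwise non negative and $f'(u) > 0$, its vanishing forces $g(\beta Kf(u) + \beta h) = u$, which is exactly the equilibrium condition for (\ref{CP}).

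For (i) I would reuse the splitting $T(t) = T_{1}(t) + T_{2}(t)$ introduced in the proof of Theorem \ref{attractor}. There it was shown that $T_{1}(t)u \to 0$ in $L^{p}(\Omega)$ uniformly on bounded sets, while $\bigcup_{t \geq 0} T_{2}(t)\mathcal{C}$ lies in a bounded subset of $W^{1,p}(\Omega)$ for any bounded $\mathcal{C}\subset X$, hence is relatively compact in $L^{p}(\Omega)$ by the Rellich--Kondrachov compact embedding. Consequently the positive orbit of a bounded set is the sum of a relatively compact set and one that goes to zero, and thus is itself precompact. Alternatively, one can invoke the existence of the compact global attractor $\mathcal{A}$ from Theorem \ref{attractor} directly, together with the invariance of the ball $\{\|u\|_{\infty} \leq \rho\}$ established at the beginning of Section \ref{lyap}.

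I do not expect any serious obstacle: both conditions are genuinely already proven, and the proposition is a synthesis. The only subtle point is making sure that the ``constancy implies equilibrium'' clause in the definition of a Lyapunov functional is rigorously extracted from the pointwise characterization $\mathcal{I}(u) = 0 \iff u$ is an equilibrium, rather than the weaker statement that the $\omega$-limit consists of equilibria; this is handled by the factorization identity at the end of Theorem \ref{Teorema 4.2} together with the positivity of $f'$.
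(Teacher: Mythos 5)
Your proposal is correct and follows essentially the same route as the paper: the paper's proof likewise obtains precompactness of orbits from the existence of the global attractor (Theorem \ref{attractor}) and the Lyapunov functional from Theorems \ref{Teorema 4.1} and \ref{Teorema 4.2} together with Remark \ref{Remark 4}. You simply spell out in more detail the ``constancy implies equilibrium'' clause and the compactness argument, both of which the paper compresses into a two-sentence citation of earlier results.
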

\begin{proof} The precompacity of the orbits follows from the existence of
the global attractor (see Theorem \ref{attractor}).
From Theorems \ref{Teorema 4.1} and \ref{Teorema
4.2}, and Remark \ref{Remark 4}, we have existence of a continuous
Lyapunov's functional. 
\end{proof}

From Proposition \ref{gradient}, we have the following characterization of the attractor (see \cite{Hale} -
Theorem 3.8.5).
\begin{teo}\label{Teorema 5.2} Assume the same assumptions of Proposition \ref{gradient}. Then the attractor $\cal A$ is the unstable set of the 
equilibrium point set of $T(t)$, that is, ${\cal A} =W^{u}(E)$.
\end{teo}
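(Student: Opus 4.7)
The plan is to invoke the abstract characterization of attractors for gradient semigroups in Hale's book (Theorem 3.8.5 of \cite{Hale}), which asserts that for a $C^{r}$-gradient semigroup possessing a global attractor $\mathcal{A}$ and whose set of equilibria $E$ is bounded, one has $\mathcal{A}=W^{u}(E)$. So the task reduces to verifying the three ingredients of that theorem in our setting.

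First I would recall that by Proposition \ref{gradient} the semigroup $T(t)$ generated by (\ref{CP}) is gradient on the invariant set $\{u\in L^{p}(\Omega):\|u\|_{\infty}\leq\rho\}$, with continuous Lyapunov functional $\mathcal{F}$ given by (\ref{4.8}); and by Theorem \ref{attractor} the global attractor $\mathcal{A}$ exists in $L^{p}(\Omega)$. Next, the characterization of equilibria in the last part of Theorem \ref{Teorema 4.2}, namely that $u\in E$ if and only if $u$ is a critical point of $\mathcal{F}$ if and only if $g(\beta Kf(u)(x)+\beta h)=u(x)$ a.e., together with Theorem \ref{teolimit} (which bounds the attractor in $L^{\infty}$ by $\rho$) implies that $E\subset\mathcal{A}$ and in particular $E$ is bounded in $X$, giving the hypothesis on the equilibrium set.

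Then the conclusion $\mathcal{A}=W^{u}(E)$ follows directly from Hale's theorem. The inclusion $W^{u}(E)\subset\mathcal{A}$ is immediate because $\mathcal{A}$ is the maximal bounded invariant set and any point on the unstable manifold of an equilibrium lies on a globally defined bounded orbit. For the reverse inclusion, the gradient structure together with Corollary \ref{Coro 4.4} (no nontrivial recurrent points) and LaSalle's invariance principle forces every full bounded orbit in $\mathcal{A}$ to have $\alpha$-limit set contained in $E$, which is precisely the definition of belonging to $W^{u}(E)$.

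The main obstacle, such as it is, lies in carefully checking that our setting matches the hypotheses of the abstract theorem: gradient property, continuity and boundedness from below of the Lyapunov functional, precompactness of positive orbits, and boundedness of $E$. All of these have in fact already been established in the preceding sections (Theorems \ref{attractor}, \ref{teolimit}, \ref{Teorema 4.1}, \ref{Teorema 4.2}, Proposition \ref{gradient}, Corollary \ref{Coro 4.4}, Remark \ref{Remark 4}), so the proof amounts to a short invocation of \cite[Theorem 3.8.5]{Hale}.
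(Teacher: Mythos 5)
Your proposal is correct and matches the paper's treatment: the paper gives no separate proof of this theorem, stating it as a direct consequence of Proposition \ref{gradient} via \cite[Theorem~3.8.5]{Hale}, exactly as you do. Your additional verification of the hypotheses (precompactness of orbits, continuity and lower boundedness of $\mathcal{F}$, boundedness of $E$ inside $\mathcal{A}$) is consistent with what the preceding results of the paper establish.
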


%



\begin{thebibliography}{99}

\bibitem{Amari} S. Amari; {Dynamics of pattern formation in lateral-inhibition type
neural fields}, Biol. Cybernetics {\bf {27}} (1977) 77-87.

\bibitem{Amari2} S. Amari; { Dynamics stability of formation of cortical maps, In: M. A. Arbib and S. I. Amari (Eds)}, Dynamic Interactions in Neural Networks: Models and Data, Springer-Verlag, New York,  1989, pp. 15-34.


\bibitem{FAH} F. D. M. Bezerra, A. L. Pereira and S. H.  da Silva,  \emph{Existence and continuity of global attractors and nonhomogeneous equilibria for a class of evolution equation with  non local terms}. J.  Math. Anal. Appl., {\bf 396} (2012) 590-600.

%

\bibitem{Cortazar} C. Cortaza, M. Elgueta,  J. D. Rossi;  \emph{A non-local diffusion equation whose solutions develop a free boundary}. Ann.  Henri Poincar\'e, {\bf 2}, nº 2  (2005) 269-281.

\bibitem{Chen} F. Chen;
{ Travelling waves for a neural network}, Electronic Journal
Differential Equations, {\bf 2003}, no. 13, (2003) 1-14.


\bibitem{Coombes} S. Coombes, H. Schimidt and I. Bojak;
{ Interface dynamics in planar neural field model}, Journal
Mathematical Neurosciences, {\bf 2}, no. 9, (2012) 1-27.

\bibitem{DK} J. L. Daleckii and M. G. Krein, \emph{ Stability of Solutions of Differential Equations
in Banach Spaces.} American Mathematical Society, Providence, Rhode
Island, 1974.

\bibitem{DaSilvaPereira} S. H. Da Silva and A. L. Pereira, \emph{Existence of global attractors and gradient property for a class of non local evolution equations},  S\~ao Paulo J. Math. Sci. {\bf 2} (2008) 1-20.

\bibitem{Silva} S.H. Da Silva and A.L. Pereira; { Global attractors for neural fields in a
weighted space.} Matem\'atica Contemporanea, {\bf 36} (2009) 139-153.

\bibitem{Silva2} S.H. Da Silva; { Existence and upper semicontinuity of global attractors for neural fields in an unbounded domain.} Electronic Journal of Differential Equations, {\bf 2010}, no. 138, (2010) 1-12.

\bibitem{Silva3} S.H. Da Silva; { Existence and upper semicontinuity of global attractors for
neural network in a bounded domain.} Differential Equations and Dynamical Systems, {\bf 19}, no. 1-2, (2011) 87-96.

\bibitem{Silva4} S.H. Da Silva; { Properties of an equation for neural fields in a bounded domain.} Electronic Journal of Differential Equations, {\bf 2012}, no. 42, (2012) 1-9.

\bibitem{DaSB} S. H. Da Silva, F. D. M. Bezerra, \emph{Finite fractal dimensionality of attractors for nonlocal evolution
equations}. Electronic Journal of Differential Equations, {\bf
2013}, No. 221, (2013) 1-9.

\bibitem{DaSilvaPereira2} S. H. Da Silva and A. L. Pereira, \emph{Asymptotic behavior for a nonlocal model of neural
fields},  S\~ao Paulo J. Math. Sci. {\bf 9} (2015) 181-194.


\bibitem{Ermentrout} G.B. Ermentrout and J.B. McLeod;
{ Existence and uniqueness of traveliing waves for a neural
network}, Procedings of the Royal Society of Edinburgh, {\bf
123A} (1993) 461-478.

\bibitem{Ermentrout3} G.B Ermentrout;
{ Neural networks as spatio-temporal pattern-forming systems}. Rep. Prog. Phys., {\bf 61} (1998) 353-430.

\bibitem{Ermentrout2} G.B. Ermentrout, J.Z. Jalics and J.E. Rubin;
{ Stimulus-driven travelling solutions in continuum neuronal models with general smoth firing rate functions}. SIAM, J. Appl. Math, {\bf 70} (2010) 3039-3064.

\bibitem{folland} G. B. Folland \emph{Introduction to partial differential equations}, Princeton Un. Press (1995).



\bibitem{Hale} J.K. Hale; { Asymptotic Behavior of Dissipative
Systems}. American Surveys and Monographs, N. 25, Providence, 1988.

\bibitem{Henry} D. Henry; { Geometric Theory of Semilinear
Parabolic Equations}. Lecture Notes in Mathematics N. 840,
Springer-Verlag, 1981.


\bibitem{Kishimoto} K. Kishimoto and S. Amari;
{ Existence and Stability of Local Excitations in Homogeneous
Neural Fields}, J. Math. Biology, {\bf {07}} (1979) 303-1979.

\bibitem{Kubota} S. Kubota and K. Aihara;
{ Analyzing Global Dynamics of a Neural Field Model},
Neural Processing Letters, {\bf {21}} (2005) 133-141.


\bibitem{ladas} G.E. Ladas and V. Lakshmikantham,  \emph{Differential equations in abstract spaces }, Academic Press, N.Y., 1972.

\bibitem{Laing} C.R. Laing, W.C. Troy, B. Gutkin and G.B. Ermentrout;
{ Multiplos Bumps in a Neural Model of Working Memory}, SIAM J.
Appl. Math., {\bf {63}}, no. 1, (2002) 62-97.

\bibitem{Orland} Masi, A., Orland, E., Presutti, E., Triolo, L.,
{\it Uniqueness and global stability of the instanton in non local
evolution equations}. Rendiconti di Matematica, Serie VII, Vol.
14, (1994), 693-723.



\bibitem{Rall} Rall, L.B., Nonlinear Functional Analysis and Applications. Academic Press, New York-
London, 1971.

\bibitem{rossi} J. D. Rossi,
\emph{ Asymptotic behaviour of solutions to evolution problems with non local diffusion}, Course in, CIEM, Castro Urdiales, Cantabra, Spain, Luly 6-17, 2009. Avaliable in htt://mate.dm.uba.ar/jrossi/.


\bibitem{Rubin} J. E. Rubin and W.C. Troy; { Sustained spatial patterns of
activity in neural populations without recurrent Excitation,} SIAM
J. Appl. Math., {\bf 64} (2004) 1609-1635.

\bibitem{Teman} R. Teman; { Infinite Dimensional Dynamical Systems in Mechanics and
Physics.} Springer-Verlag, New York, 1988.

\bibitem{Wilson} H.R. Wilson and J.D. Cowan; { Excitatory and
inhibitory interactions in localized populations of model
neurons,} Biophys. J., {\bf 12} (1972) 1-24.

\bibitem{Wu} S. Wu, S. Amari and H. Nakahara; {\it Population coding and decoding in a neural field: a computational study,} Neural Computation, {\bf 14} (2002), 999-1026.

\end{thebibliography}
\end{document}